\newtheorem{thm}{Theorem}[section]
\newtheorem{cor}[thm]{Corollary}
\newtheorem{lem}[thm]{Lemma}
\newtheorem{prop}[thm]{Proposition}
\theoremstyle{definition}
\newtheorem{defn}[thm]{Definition}
\theoremstyle{remark}
\numberwithin{equation}{section}
\theoremstyle{remark}
\newcommand{\no}{\noindent}
\newcommand{\bea}{\begin{eqnarray*}}
\newcommand{\eea}{\end{eqnarray*}}
\begin{document}
\title{ Notes on the Short $\mathbb{C}^k$'s}

\keywords{Short $\mathbb{C}^k$, Loewner Short $\mathbb{C}^k$, non-autonomous basin of attraction, H\'{e}non map }
\thanks{}
\subjclass{Primary: 32H02  ; Secondary : 32H50}
\author{John Erik Forn\ae ss and Ratna Pal}

\address{John Erik Forn\ae ss: Department of Mathematical Sciences, NTNU Trondheim, Norway}
\email{john.fornass@ntnu.no}

\address{Ratna Pal: Indian Institute of Science Education and Research Berhampur, Transit campus (Govt. ITI Building), Engg. School Junction, Berhampur, Odisha 760 010, India}
\email{ratnap@iiserbpr.ac.in} 

\begin{abstract}
Domains that are increasing union of balls (up to biholomorphism) and on which the Kobayashi metric vanishes identically arise inexorably in complex analysis. In this article we show that in higher dimensions these domains have infinite volume and the Bergman spaces of these domains are trivial. As a consequence they fail to be strictly pseudo-convex at each of their boundary points although these domains are pseudo-convex by definition.  These domains can be of different types and one of them is Short $\mathbb{C}^k$'s. In pursuit of identifying the Runge Short $\mathbb{C}^k$'s (up to biholomorphism), we introduce a special class of Short $\mathbb{C}^k$'s, called Loewner Short $\mathbb{C}^k$'s. These are those Short $\mathbb{C}^k$'s which can be exhausted in a continuous manner by a strictly increasing parametrized family of open sets, each of which is biholomrphically equivalent to the unit ball and therefore, they are Runge up to biholomorphism. Although, the question of whether all Short $\mathbb{C}^k$'s are Runge (up to biholomorphism), or whether all Short $\mathbb{C}^k$'s are Loewner remains unsettled,  we show that the typical Short $\mathbb{C}^k$'s are Loewner. In the final section, we construct a bunch of non-autonomous basins of attraction, which serve as interesting  examples of Short $\mathbb{C}^2$'s. 
\end{abstract}


\maketitle
\no 
\section{Introduction}
\no 
For $k\geq 2$, let  $\Omega \subseteq \mathbb{C}^k$  be an increasing union of unit balls, i.e.,
\[
\Omega_1 \subseteq \Omega_2 \subseteq \cdots \subseteq \Omega_k \subseteq \Omega_{k+1} \subseteq \cdots \subseteq \Omega=\bigcup_{i=1}^\infty \Omega_i,
\]
 where each $\Omega_i$ is biholomorphic  to the unit ball in $\mathbb{C}^k$. 
 \begin{defn}
 An increasing union of unit balls $\Omega \subseteq \mathbb{C}^k$, for $k\geq 2$,  is called Short $\mathbb{C}^k$ if the  Kobayashi metric vanishes identically on $\Omega$ but there exists a non-constant bounded above plurisubharmonic function (psh) defined on $\Omega$.  
 
 \end{defn}
 
Genesis of these Short $\mathbb{C}^k$'s lies in the {\it{union problem}}: Is an increasing union of Stein manifolds always Stein?  
 In \cite{F1}, the first author settled this question by showing that in dimension three onwards, there exist increasing sequences of balls whose final unions are not Stein (also see \cite{Wold1}).  Another related theme is to look for the model domains for increasing unions of balls. The possible model domains for increasing unions of balls  when the Kobayashi metric does not vanish identically therein, was discussed in \cite{FS} (also see \cite{FSt}).
In particular, it follows (from \cite{FS}) that in dimension two, up to biholomorphism any such domain $\Omega$ is either the unit ball or the product domain $\Delta \times \mathbb{C}$, where $\Delta$ is the unit disc.  On the other hand, when the Kobayashi metric on an increasing union of balls $\Omega$ vanishes identically, one can immediately see that $\Omega$ could be all of $\mathbb{C}^k$ or a biholomorphic copy of $\mathbb{C}^k$, the so called Fatou-Bieberbach (FB) domain. Further, in \cite{F}, another possible model domain  for $\Omega$ was found when the Kobayashi metric vanishes identically on $\Omega$: It was shown that $\Omega$ could be a Short $\mathbb{C}^k$ which is evidently never biholomorphic to any FB domain (or $\mathbb{C}^k$). These domains appear naturally in complex dynamics as non-autonomous basins of attraction of automorphisms of $\mathbb{C}^k$. 
 
Although, recently many interesting examples of Short $\mathbb{C}^k$'s  are found (\cite{ATP}, \cite{Bera}, \cite{BPV}), mainly on par with its sibling FB domains,  the general theory of Short $\mathbb{C}^k$'s is not yet well-developed. The present article started off with an aim to recognize the fundamental properties of Short $\mathbb{C}^k$'s. Of our particular interest is to compare the properties of Short $\mathbb{C}^k$'s and $\mathbb{C}^k$ (or FB domains). Being a  biholomorphic copy of $\mathbb{C}^k$,  any FB domain has infinite volume. So a natural question is whether all Short $\mathbb{C}^k$'s have infinite volume or not. During the course of seeking  the answer, we ended up proving a more general result which answers the before-mentioned question in the affirmative. 
\begin{thm} \label{volume}
For $k\geq 2$, let $\{\Omega_i\}_{i\geq 1}$ be a sequence of domains in $\mathbb{C}^k$, each of which is biholomorphic to the unit ball, such that  $\Omega_i \subseteq \Omega_{i+1}$ for $i\geq 1$ and the Kobayashi metric vanishes identically on $\Omega$. Then the volume of $\Omega$ is infinite and the Bergman space 
\[
A^2(\Omega)=\left\{f: \Omega \rightarrow \mathbb{C}: \int_\Omega {\lvert f \rvert}^2 < \infty\right\}
\]
consists only of the function $f\equiv 0$.
\end{thm}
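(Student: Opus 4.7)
The plan is to prove the triviality of $A^2(\Omega)$ first; the infinite volume claim is then immediate because if $\text{vol}(\Omega) < \infty$ the constant function $f \equiv 1$ would sit in $A^2(\Omega)$ as a nonzero element. So the whole game reduces to showing that every $f \in A^2(\Omega)$ must vanish at each $z_0 \in \Omega$.

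Fix $f \in A^2(\Omega)$ and $z_0 \in \Omega$. For each large $i$, $z_0 \in \Omega_i$, and by composing the biholomorphism $\Omega_i \cong \mathbb{B}^k$ with an automorphism of $\mathbb{B}^k$, I can pick biholomorphisms $\psi_i : \mathbb{B}^k \to \Omega_i$ with $\psi_i(0)=z_0$. The change-of-variables formula gives
\[
\int_{\Omega_i}|f|^2\,dV \;=\; \int_{\mathbb{B}^k}|f\circ\psi_i|^2\,|\det\psi_i'|^2\,dV,
\]
so the function $g_i := (f\circ\psi_i)\,\det\psi_i'$ lies in $A^2(\mathbb{B}^k)$ with $\|g_i\|_{A^2(\mathbb{B}^k)}^2 \le \|f\|_{A^2(\Omega)}^2$. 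The reproducing kernel of the ball at the origin supplies the pointwise bound
\[
|g_i(0)|^2 \;\le\; K_{\mathbb{B}^k}(0,0)\,\|g_i\|_{A^2(\mathbb{B}^k)}^2,
\]
which translates into
\[
|f(z_0)|^2\,|\det\psi_i'(0)|^2 \;\le\; K_{\mathbb{B}^k}(0,0)\,\|f\|_{A^2(\Omega)}^2.
\]
So it suffices to show $|\det\psi_i'(0)| \to \infty$.

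For this I invoke the hypothesis that the Kobayashi metric vanishes on $\Omega$. The infinitesimal Kobayashi metric on $\mathbb{B}^k$ at the origin is, up to a dimensional constant, the Euclidean norm, and under the biholomorphism $\psi_i$ it pushes forward to the Kobayashi metric on $\Omega_i$ at $z_0$. Hence for every tangent vector $v$,
\[
K_{\Omega_i}(z_0,v) \;=\; c_k\,\bigl|(\psi_i'(0))^{-1}v\bigr|.
\]
Monotonicity of the Kobayashi metric under inclusion and the standing hypothesis give $K_{\Omega_i}(z_0,v)\searrow K_\Omega(z_0,v)=0$ for every $v$. Thus $(\psi_i'(0))^{-1}\to 0$ pointwise, hence in operator norm (finite-dimensional), so its smallest singular value goes to $0$; equivalently the \emph{smallest} singular value of $\psi_i'(0)$ goes to infinity, and since $|\det\psi_i'(0)|$ is the product of singular values, it tends to $\infty$. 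Combining with the displayed inequality forces $f(z_0)=0$, and since $z_0$ was arbitrary, $f\equiv 0$.

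The main obstacle I anticipate is simply making sure the Kobayashi-metric step is phrased unambiguously, namely that the infinitesimal (as opposed to integrated) Kobayashi metric actually vanishes on $\Omega$; but this is standard since the integrated Kobayashi pseudodistance vanishes on a connected open set iff its infinitesimal form does (through any point it can be made arbitrarily small on any prescribed tangent vector). Once that pointwise-to-operator-norm passage is clean, the rest of the argument is just the reproducing kernel inequality on the ball plus the determinantal consequence of singular values tending to infinity, and the infinite volume statement follows as noted at the start.
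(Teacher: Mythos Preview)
Your proof is correct and follows essentially the same route as the paper: pull back $f$ to the ball via the biholomorphisms, use the vanishing of the Kobayashi metric to force the Jacobian $|\det\psi_i'(0)|\to\infty$, and conclude via an $L^2$ mean-value/reproducing-kernel bound on $\mathbb{B}^k$. The only cosmetic differences are that the paper carries out the Kobayashi step by an explicit Schwarz-lemma argument (rather than quoting invariance and convergence under increasing unions) and replaces your reproducing-kernel inequality by the equivalent orthogonality of Taylor terms on the ball.
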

As an immediate corollary we get the following.
\begin{cor}
Short $\mathbb{C}^k$'s have infinite volume. Further the Bergman space of any Short $\mathbb{C}^k$ is trivial. 
\end{cor}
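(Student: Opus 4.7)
The plan is to observe that this corollary follows immediately from Theorem \ref{volume} upon unpacking the definition of Short $\mathbb{C}^k$. According to the definition stated just above, a Short $\mathbb{C}^k$ is by construction a domain $\Omega = \bigcup_{i=1}^\infty \Omega_i \subseteq \mathbb{C}^k$ with $\Omega_i \subseteq \Omega_{i+1}$, each $\Omega_i$ biholomorphic to the unit ball in $\mathbb{C}^k$, and with the Kobayashi metric vanishing identically on $\Omega$. These are precisely the hypotheses required by Theorem \ref{volume}.

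Applying Theorem \ref{volume} directly therefore yields both assertions of the corollary at once: the volume of $\Omega$ is infinite, and the Bergman space $A^2(\Omega)$ consists only of the zero function. The additional requirement in the definition of Short $\mathbb{C}^k$ --- namely, the existence of a non-constant, bounded above plurisubharmonic function on $\Omega$ --- plays no role in this deduction; it serves only to distinguish Short $\mathbb{C}^k$'s from $\mathbb{C}^k$ itself and from Fatou--Bieberbach domains, but is irrelevant to the volume and Bergman-space conclusions.

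There is no genuine obstacle to overcome here: the entire substance of the corollary is already packaged inside Theorem \ref{volume}, and the corollary amounts to little more than a verification that Short $\mathbb{C}^k$'s lie in the class to which that theorem applies. All of the real work must therefore be invested in the proof of Theorem \ref{volume} itself, and the corollary is recorded merely to highlight the geometric consequence for the particular class of domains that is the focus of the paper.
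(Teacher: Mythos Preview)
Your proposal is correct and matches the paper's treatment: the corollary is stated there as an immediate consequence of Theorem~\ref{volume}, with no separate proof given. Your observation that the extra psh-function condition in the definition of a Short~$\mathbb{C}^k$ plays no role is accurate and worth noting.
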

Next we discuss some other intriguing by-products of Theorem \ref{volume}.

\medskip 
\no 
{\it A remark on Bedford conjecture:}
We address Problem 24 in \cite{A}.
Let $\{F_j\}_{j\geq 1}$ be a sequence of automorphisms of $\mathbb{C}^k$ such that $F_j(0)=0$, for all $j\geq 1$. Further, assume that $F_j$'s satisfy {\it uniform bound condition}, i.e., there exist $0<c<d<1$ such that 
\[
c \lVert z\rVert \leq \lVert F_j(z)\rVert \leq d \lVert z\rVert,
\]
for all $z\in B(0,1)$, the unit ball in $\mathbb{C}^k$. Then the {\it Bedford conjecture} (see \cite{A}) states that the non-autonomous basin of attraction 
\[
\Omega_{\{F_j\}}=\{z\in \mathbb{C}^k: F_n \circ \cdots \circ F_1(z)\rightarrow 0 \text{ as } n\rightarrow \infty\}
\]
is biholomorphic to $\mathbb{C}^k$. While  the conjecture still remains open, it is well-known that $\Omega_{\{F_j\}}$ is an increasing union of balls and the Kobayashi metric vanishes identically on $\Omega_{\{F_j\}}$.  Therefore by Theorem \ref{volume},  $\Omega_{\{F_j\}}$ has infinite volume and has trivial Bergman space. 
 
In \cite{Wieg}, it was shown that the dimension of the Bergman space of any domain in $\mathbb C$ is either $0$ or $\infty$. He also showed that the result is not true in higher dimensions by finding explicit examples of domains with finite dimensional Bergman space. However, the domains he constructed are (Reinhardt but) not pseudoconvex.  This triggered the question whether there exists a pseudoconvex domain in higher dimension with finite dimensional Bergman space. This question is still unsettled. However, it was addressed in some recent works (see  \cite{J},
 \cite{GHH}, \cite{PZ}).
In particular, in \cite{GHH} Gallagher--Harz--Herbort gave a sufficient condition for  pseudo-convex domains to have infinite dimensional Bergman spaces in terms of their cores. Further they found three different classes of pseudo-convex domains which satisfy the ``core" condition and thus have infinite dimensional Bergman spaces.
We discuss these domains briefly here:
\begin{itemize}
\item 
Let $\Omega$ be a domain and let $p\in \partial \Omega$ be such that there exists a continuous psh function $\varphi_p$ defined on a one-sided open neighbourhood $\mathcal{N}_p$ such that $\varphi_p^*(z)<0$ for all $z\in \overline{\mathcal{N}}_p \setminus \{p\}$ and $\varphi_p^*(p)=0$ where $\varphi_p^*(z)=\limsup_{z'\rightarrow z} \varphi_p(z')$. Thus $p$ is a local peak point
for the class of continuous psh functions. In this case,  $\dim A^2(\Omega)=\infty$;
\item 
If there exists a point on the boundary of $\Omega$ near which $\partial \Omega$ is $C^\infty$ smooth and of finite type in the sense of D'Angelo, then $\dim A^2(\Omega)=\infty$;
\item 
If there exists a point on the boundary of $\Omega$ near which the domain is strictly pseudo-convex, then $\dim A^2(\Omega)=\infty$.
\end{itemize}
In light of the above discussion, we get:
\begin{cor}
For $k\geq 2$, let $\Omega \subseteq \mathbb{C}^k$ be an increasing union of balls on which the Kobayashi metric vanishes identically (in particular, if $\Omega$ is a Short $\mathbb{C}^k$), then it has neither a local peak point nor any point of finite type on its boundary. Further, $\partial \Omega$ fails to be strictly pseudo-convex at each of its point and $\Omega$ cannot have any strictly pseudo-convex neighbourhood.
\end{cor}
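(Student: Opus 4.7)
The plan is to derive the corollary as a direct contrapositive from Theorem \ref{volume} together with the three sufficient conditions of Gallagher--Harz--Herbort recalled just above. Theorem \ref{volume} asserts that $A^2(\Omega) = \{0\}$, so in particular $\dim A^2(\Omega) = 0 < \infty$. Each of the three bulleted statements from \cite{GHH} produces an infinite-dimensional Bergman space under its respective geometric hypothesis, so none of those hypotheses can hold for $\Omega$.

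Concretely, first I would suppose toward contradiction that $\Omega$ admits a boundary point $p$ that is a local peak point for the class of continuous psh functions in the sense of the first bullet. The first result of \cite{GHH} then yields $\dim A^2(\Omega) = \infty$, contradicting $A^2(\Omega) = \{0\}$ from Theorem \ref{volume}. Hence no such $p$ can exist. The same scheme handles the finite-type conclusion: if some $p \in \partial \Omega$ admitted a neighbourhood in which $\partial \Omega$ is $C^\infty$ smooth and of finite D'Angelo type, the second bullet from \cite{GHH} would give $\dim A^2(\Omega) = \infty$, again contradicting Theorem \ref{volume}.

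For the strict pseudo-convexity assertions, I would run the analogous argument with the third bullet of \cite{GHH}. If $\partial \Omega$ were strictly pseudo-convex at some $p \in \partial\Omega$, the third bullet forces $\dim A^2(\Omega) = \infty$, a contradiction. For the last clause, I would interpret ``$\Omega$ has a strictly pseudo-convex neighbourhood'' as the existence of a boundary point $p$ and an open set $U\ni p$ for which $\Omega \cap U$ is a strictly pseudo-convex domain (this is the only natural reading consistent with the previous sentence); then $p$ is a strictly pseudo-convex boundary point, which has just been ruled out. Note that strict pseudo-convexity at a point of $\partial\Omega$ is a special case of the finite-type condition, so this clause could also be deduced directly from the finite-type part if one prefers.

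The main, and in fact only, obstacle is conceptual rather than technical: identifying the correct interpretation of ``strictly pseudo-convex neighbourhood'' and observing that Theorem \ref{volume} upgrades the qualitative statement ``Bergman space fails to be infinite-dimensional'' to the strongest possible form ``Bergman space is zero,'' which is exactly the hypothesis one needs to invert each of the three implications of \cite{GHH}. Apart from that, the argument is a routine contrapositive application of the cited results.
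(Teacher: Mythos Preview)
Your proposal is correct and matches the paper's approach exactly: the paper states the corollary with the phrase ``In light of the above discussion, we get'' and gives no further argument, so the intended proof is precisely the contrapositive you describe---Theorem \ref{volume} forces $A^2(\Omega)=\{0\}$, and each of the three bulleted criteria from \cite{GHH} would yield $\dim A^2(\Omega)=\infty$. Your remark about the phrase ``strictly pseudo-convex neighbourhood'' is fair, as the paper does not disambiguate it either.
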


For $k\geq 2$, the primary source of examples of Short $\mathbb{C}^k$'s are the non-autonomous basins of attraction of sequence of automorphisms $F_j$'s of $\mathbb{C}^k$ which are of the form:
\begin{equation} \label{shift}
F_j(z_1, \cdots, z_k):=\left(z_1^d+P_{1,j}(z_1,\ldots, z_k), P_{2,j}(z_1,\ldots, z_k), \ldots, P_{k,j}(z_1,\ldots, z_k)\right),
\end{equation}
where $d\geq 2$ and the degree of $ P_{i,j}$'s are at most $(d-1)$. Further,  the moduli of the coefficients of $P_{i,j}$'s are at most $\eta_{j}=a_{j}^{d^j}$ with $0\leq a_{j+1}\leq a_{j} <1$, for $j\geq 1$. These Short $\mathbb{C}^k$'s turn out to be the $0$-sublevel sets of global psh functions, i.e., 
\[
\Omega_{\{F_j\}}=\{z\in \mathbb{C}^k: \varphi(z)<0\},
\]   
for some psh function $\varphi$ on $\mathbb{C}^k$ which can be obtained by modifying the Green's functions of the sequences of automorphisms $\{F_j\}$'s.  In dimension $2$,  another source of these domains are the sublevel sets of Green functions of H\'{e}non maps which are  the most important class of polynomial automorphisms of $\mathbb{C}^2$.  They are of the form $(z,w) \mapsto (p(z)+\delta w,z)$ where $p$ is a polynomial in $z$ of degree $d\geq 2$ and $\delta \neq 0$ (see \cite{F} for details). Further the Green's function for a H\'{e}non map $H$ is  
\[
G_H^+(z,w)=\frac{1}{d^n} \log^+ \left\lVert H^n(z,w)\right\rVert,
\] 
for $(z,w)\in \mathbb{C}^2$, where $\log^+(x)=\max \{\log x,0\}$. Now both of these classes of examples are Runge, i.e.,  holomorphic functions on these domains can be approximated uniformly on compact sets by polynomials,  simply because they are sublevel sets of global psh functions.
However, there exist examples of Short $\mathbb{C}^k$'s which are not Runge. 
In \cite{Wold}, Wold gave an example of a Fatou Bieberbach domain $U\subseteq \mathbb C^2$ which is not Runge. He showed that there exists a compact set $K\subseteq U$ whose polynomial convex hull contains points outside the domain $U$. Now let $F$ be a biholomorphism between $U$ and $\mathbb{C}^2$.   There are plenty of Short $\mathbb{C}^2$'s, belonging to the large class of before-mentioned examples, each of which contains $F(K)$. Now carrying back these Short $\mathbb{C}^2$'s via $F^{-1}$ we get Short $\mathbb{C}^2$'s in $U$ containing $K$. Clearly these Short $\mathbb{C}^2$'s are non-Runge but biholomorphic to Runge domains. In fact, to best of our knowledge, all known examples of Short $\mathbb{C}^k$'s  are either Runge or biholomorphic to  Runge domains. Rather a bit more can be said: If $\Omega\subseteq \mathbb{C}^k$ is an increasing union of $\Omega_i$'s, for $i\geq 1$ and if each $\Omega_i$ is biholomorphic to the unit ball via a global automorphism of $\mathbb{C}^k$, then by Prop.\ 1.2 in \cite{FrR}, $\Omega$ is Runge (this criterion holds for all known examples of Short $\mathbb{C}^k$'s). These observations prompts the question:  If not Runge, whether a Short $\mathbb{C}^k$ always biholomorphic to a Runge one?  In an attempt to answer this question,
we introduce a sub-class of Short $\mathbb{C}^k$'s called {\it Loewner Short $\mathbb{C}^k$}'s. They are those Short $\mathbb{C}^k$'s which can be parametrized on the positive real axis by strictly monotonically increasing domains each of which is biholomorphically equivalent either to the unit ball or to the unit polydisc. Being a biholomorphic copy of $\mathbb{C}^k$, any FB domain enjoys this kind of holomorphic exhaustion by balls. So while comparing FB domains and Short $\mathbb{C}^k$'s it is natural to be inquisitive about whether Short $\mathbb{C}^k$'s can also be exhausted by a holomorphically varying parametrized family of strictly increasing balls. This serves as another motivation to define Loewner Short $\mathbb{C}^k$'s.
 
 \begin{defn} \label{defn Loewner}
A Short $\mathbb{C}^k$ $\Omega$ is {\it Loewner} if the following holds:
\begin{itemize}
\item[(1)]  $\Omega= \cup_{t\geq 0} \Omega_t$, where either  the domain  $\Omega_t$ is biholomorphic to the unit ball in $\mathbb{C}^k$, for each $t\geq 0$  or for each $t\geq 0$, the domain  $\Omega_t$ is biholomorphic to the unit polydisc in $\mathbb{C}^k$.
\item[(2)] for any $0\leq t<s$, $\Omega_t  \subset \subset \Omega_s$.
\item[(3)] For all $t>0$, $\Omega_t=\cup_{t'<t} \Omega_{t'}$.
\item[(4)] For all $t'<t$, $\overline{\Omega}_{t'}=\cap_{t>t'}\Omega_t$.
\end{itemize}
\end{defn}
\begin{prop}\label{prop Loewner}
Any Loewner Short $\mathbb{C}^k$ is biholomorphic to a Runge domain.
\end{prop}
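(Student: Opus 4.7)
The plan is to realize $\Omega$, up to biholomorphism, as an increasing union of Runge open subsets of $\mathbb{C}^k$. Since an increasing union of Runge opens in $\mathbb{C}^k$ is itself Runge---any compact $K\subset\Omega$ sits in some single Runge piece, where polynomial approximation of any $f\in\mathcal{O}(\Omega)$ is automatic---it suffices to produce a biholomorphism $\Phi\colon\Omega\to\bigcup_n V_n\subset\mathbb{C}^k$ with each $V_n$ Runge in $\mathbb{C}^k$.

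First I would pass from the continuous parametrization to a discrete exhaustion. Conditions (2) and (3) produce a sequence $t_0<t_1<\cdots\nearrow\infty$ with $G_n:=\Omega_{t_n}\Subset G_{n+1}$ and $\Omega=\bigcup_n G_n$. By the Loewner hypothesis, each $G_n$ is biholomorphic to the unit ball (or polydisc) $B\subset\mathbb{C}^k$; fix biholomorphisms $\psi_n\colon B\to G_n$. The transition maps $\tau_n=\psi_{n+1}^{-1}\circ\psi_n\colon B\to\psi_{n+1}^{-1}(G_n)\Subset B$ encode all of $\Omega$ up to biholomorphism.

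Next, I would inductively construct biholomorphic embeddings $\Phi_n\colon G_n\to V_n\subset\mathbb{C}^k$, with $V_n$ Runge in $\mathbb{C}^k$ and $\|\Phi_{n+1}-\Phi_n\|_{\overline{G_{n-1}}}<2^{-n}$, starting from $\Phi_1:=\psi_1^{-1}\colon G_1\to B$. Given $\Phi_n$, the composition $\Phi_n\circ\psi_{n+1}$ is a biholomorphism of $\psi_{n+1}^{-1}(G_n)\Subset B$ onto the Runge domain $V_n\subset\mathbb{C}^k$. Choose a polynomially convex compact $L_n\subset\psi_{n+1}^{-1}(G_n)$ containing $\psi_{n+1}^{-1}(\overline{G_{n-1}})$ in its interior and invoke the Andersén--Lempert theorem to produce $F_n\in\mathrm{Aut}(\mathbb{C}^k)$ approximating $\Phi_n\circ\psi_{n+1}$ on $L_n$ within $2^{-n}$. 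Set $\Phi_{n+1}:=F_n\circ\psi_{n+1}^{-1}$ and $V_{n+1}:=F_n(B)$. Summability of the errors, together with the standard Hurwitz-type argument for the Jacobian, yields a biholomorphism $\Phi:=\lim_n\Phi_n$ from $\Omega$ onto $\bigcup_n V_n$.

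The chief obstacle is guaranteeing that $V_{n+1}=F_n(B)$ is Runge in $\mathbb{C}^k$: the image of $B$ under an arbitrary automorphism of $\mathbb{C}^k$ need not be polynomially convex. This is handled by the refinement of Andersén--Lempert used in the proof of Proposition~1.2 of \cite{FrR}: by exhausting $\psi_{n+1}^{-1}(G_n)$ by polynomially convex compacts and realizing an isotopy from the identity to $\Phi_n\circ\psi_{n+1}$ through biholomorphisms whose images remain Runge, the approximating $F_n$ may be chosen so that $F_n(B)$ itself is Runge. The continuity embedded in the Loewner parametrization---especially conditions (3) and (4)---is precisely what supplies these isotopies on each nested pair $(L_n,\psi_{n+1}^{-1}(G_n))$, so the induction closes and the proposition follows.
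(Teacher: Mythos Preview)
Your construction has the right architecture—an inductive Anders\'en--Lempert scheme producing a limit biholomorphism onto a union of Runge pieces is exactly the mechanism behind Theorem~3.4 of \cite{ABW2}, which the paper invokes—but the last paragraph misidentifies the obstacle and leaves the real one unresolved.

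First, the ``chief obstacle'' you flag is not one: for any $F\in\mathrm{Aut}(\mathbb{C}^k)$ the image $F(B)$ \emph{is} Runge. Indeed, for compact $K\subset\mathbb{C}^k$ the polynomial hull coincides with the $\mathcal{O}(\mathbb{C}^k)$-hull, and the latter is carried to itself by automorphisms; hence polynomial convexity of compacts, and therefore the Runge property of open sets, is automorphism-invariant. So once you have $F_n\in\mathrm{Aut}(\mathbb{C}^k)$, the sets $V_{n+1}=F_n(B)$ are Runge for free.

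The genuine gap is earlier: to invoke Anders\'en--Lempert (in the Forstneri\v{c}--Rosay form) and approximate $\Phi_n\circ\psi_{n+1}$ on $L_n$ by an automorphism, you need an isotopy of injective holomorphic maps from the identity to $\Phi_n\circ\psi_{n+1}$ whose images stay Runge at every time. You assert that ``the continuity embedded in the Loewner parametrization \ldots\ supplies these isotopies,'' but the Loewner data in Definition~\ref{defn Loewner} is a continuous family of \emph{domains}, not of biholomorphisms; passing from one to the other, and verifying that the intermediate images are Runge, is precisely the nontrivial content of Theorem~4.2 in \cite{ABW2}. That theorem uses conditions (2)--(4) to show each pair $(\Omega_{t},\Omega_{s})$ is Runge, which is equivalent to what you need and is not a one-line consequence of the definition. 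Without that step your induction does not start: at $n=1$ already you cannot justify approximating $\psi_1^{-1}\circ\psi_2$ by an automorphism unless you know $\psi_2^{-1}(G_1)$ is Runge in $\mathbb{C}^k$, i.e., that $(G_1,G_2)$ is a Runge pair.

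In short, the paper's two-line proof factors the problem as (i) Loewner chain $\Rightarrow$ Runge pairs (Thm.~4.2 of \cite{ABW2}) and (ii) exhaustion by Runge pairs of balls $\Rightarrow$ biholomorphic to a Runge domain (Thm.~3.4 of \cite{ABW2}). Your write-up attempts (ii) directly but smuggles (i) in as an unproved assertion.
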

Using Thm.\ 4.2 in \cite{ABW2} we get that for each $n\ge0$, the pair $\left(\Omega_n, \Omega_{n+1}\right)$ is a Runge pair and then Prop.\ \ref{prop Loewner}  follows using Thm.\ 3.4 in \cite{ABW2}, 
Thus our previously posed question on Runge embedding of Short $\mathbb{C}^k$'s can be modified as follows:

\medskip 
\no 
{\it{Question:}} Is any  Short $\mathbb{C}^k$ Loewner?

\medskip 
 Although we could not settle the question, we identify a large class of Loewner Short $\mathbb{C}^k$'s.

\begin{thm}\label{Loewner 1}
Let $0<a_{n+1} \leq a_n<1$ for all $n\geq 1$.
Let $F_n(z,w)=(z^d+q_n(z)+ \delta_n w,\delta_n z)$  with $1\leq \deg(q_n)\leq (d-1)$ such that the moduli of the coefficients of $q_n$ and $\delta_n$ are bounded by $a_n^{d^n}$.  Then the basin of attraction 
\[
\Omega=\{(z,w): F_n \circ \cdots \circ F_1(z,w)\rightarrow 0\}
\]
is a Loewner Short $\mathbb{C}^2$. 
\end{thm}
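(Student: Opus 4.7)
\no The plan is to construct the Loewner parametrization $\{\Omega_t\}_{t \geq 0}$ of $\Omega$ from a ``discrete'' chain of ball-biholomorphic subdomains obtained as pullbacks under the compositions $\Phi_n := F_n \circ \cdots \circ F_1$. The starting observation is that each $F_n$, being a H\'enon-type map with Jacobian $-\delta_n^2 \neq 0$, is a polynomial automorphism of $\mathbb{C}^2$; hence so is each $\Phi_n$, and for every $r > 0$ the preimage $\Phi_n^{-1}(B(0, r))$ is biholomorphic to $B(0, r)$ via $\Phi_n$ itself. The super-exponential decay $|q_n|, |\delta_n| \leq a_n^{d^n}$ yields the elementary estimate $\|F_n(z, w)\| \leq r^d + d \, a_n^{d^n} r$ on $B(0, r)$, from which (after possibly discarding finitely many initial indices) a radius $r_0 > 0$ can be chosen with $F_n(\overline{B(0, r_0)}) \subset\subset B(0, r_0)$ for all $n$. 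Setting $V_n := \Phi_n^{-1}(B(0, r_0))$ then produces a strictly nested exhaustion of $\Omega$ by subdomains biholomorphic to the unit ball.

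\medskip

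\no The next step is to interpolate continuously between $V_n$ and $V_{n+1}$. Since both are biholomorphic to the ball with $V_n \subset\subset V_{n+1}$, it suffices (via $\Phi_n$) to produce, for each $n$, a Loewner chain $\{W_n(s)\}_{s \in [0, 1)}$ of ball-biholomorphic subdomains of $\mathbb{C}^2$ with $W_n(0) = B(0, r_0)$ and $\bigcup_{s<1} W_n(s) = F_{n+1}^{-1}(B(0, r_0))$; then setting $\Omega_t := \Phi_n^{-1}(W_n(t-n))$ for $t \in [n, n+1)$ and $\Omega_{n+1} := V_{n+1}$ makes the family match across integer times, since the left-limit at $t = n+1$ equals $\Phi_n^{-1}(F_{n+1}^{-1}(B(0, r_0))) = V_{n+1}$ while the right-limit equals $\Phi_{n+1}^{-1}(W_{n+1}(0)) = V_{n+1}$. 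The $W_n$ itself can be built either by factoring $F_{n+1}^{-1}$ into elementary automorphisms of $\mathbb{C}^2$ (a diagonal rescaling, a transposition, and a shear, as in the standard H\'enon decomposition) and interpolating each continuously to the identity, or by invoking the general Loewner chain interpolation theory developed in \cite{ABW2}.

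\medskip

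\no The main obstacle is ensuring that the intermediate family $\{W_n(s)\}$ is strictly increasing and satisfies the left/right continuity in conditions (3) and (4) of Definition \ref{defn Loewner}: a generic path from the identity to $F_{n+1}^{-1}$ in the group of holomorphic automorphisms of $\mathbb{C}^2$ need not push $B(0, r_0)$ to a monotonically growing image. I expect to resolve this either by producing $W_n$ as the flow of a Herglotz-type vector field with prescribed endpoints (Loewner ODE), or by carefully composing the elementary factor homotopies with reparametrizations chosen so that at every instant the deformation has strictly outward normal component on the current boundary. Once $W_n$ is constructed, verification of conditions (1)-(4) of Definition \ref{defn Loewner} is direct: (1) each $\Omega_t$ is biholomorphic to the unit ball via $\Phi_n^{-1}$ composed with the ball-biholomorphism of $W_n(t-n)$; (2) follows from the strict monotonicity of $W_n$; (3) and (4) from the left-continuity and right-closure of $W_n$, together with the compatibility across integer times established above.
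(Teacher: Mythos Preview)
Your overall reduction---build the discrete chain $V_n=\Phi_n^{-1}(D)$ and then interpolate between consecutive members by working in a single chart---is exactly the reduction the paper carries out. The genuine gap is precisely where you locate it: you have not produced the monotone interpolating family, and neither of your two suggested mechanisms comes with an argument. A generic isotopy of $F_{n+1}^{-1}$ to the identity (via the H\'enon decomposition or otherwise) applied to a \emph{fixed} ball $B(0,r_0)$ does not give monotone images, and the Loewner-ODE suggestion does not obviously let you prescribe both endpoints of the chain. Invoking \cite{ABW2} does not settle this either; the paper uses that reference only to deduce the Runge property \emph{after} the Loewner structure has been built by hand.

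The paper resolves the monotonicity with two concrete moves that your plan does not contain. First, it works with bidiscs $\Delta^2(0;r,r)$ rather than balls; the product structure reduces the desired inclusion $F_{n,t}(\Delta^2_t)\subset\subset F_{n,s}(\Delta^2_s)$ to a one-variable disc inclusion on each vertical slice $\{z=z_0\}$, which becomes an elementary radius inequality. Second---and this is the key trick---the paper varies the \emph{domain} and the \emph{map} simultaneously: it sets $\Delta^2_t=\Delta^2\bigl(0;(1+t)r,(1+2t)r\bigr)$ and
\[
F_{n,t}(z,w)=\bigl((1-t)p_n(z)+[\delta_n'+Mr^{d-1}t]\,w,\ \delta_n'\, z\bigr),
\]
so that as the nonlinear part $(1-t)p_n(z)$ is switched off, the linear coefficient and the bidisc radii are simultaneously inflated. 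Strict monotonicity then reduces to the inequality $Mr^d(1+2s+2t)>|p_n(z_0)|$ for $|z_0|<(1+t)r$, which holds for a fixed large $M$; conditions (3) and (4) of Definition~\ref{defn Loewner} are checked by similarly explicit computations. Without this simultaneous inflation of the target domain, the homotopy-of-automorphisms idea does not yield strict nesting. If you switch to bidiscs and incorporate the varying-domain device, your plan becomes the paper's proof; as written, the interpolation step is a hope rather than an argument.
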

From  \cite [Thm.\ 1.4] {F} it follows that $\Omega$ is a Short $\mathbb{C}^k$. Further it follows (from the proof of \cite [Thm.\ 1.4] {F}) that for $r>0$ sufficiently small,  $\Omega= \bigcup_{n=1}^\infty {F(n)}^{-1} \left(\Delta(0;r,r)\right)$, where $F(n)=F_n \circ \cdots \circ F_1$ and $\Delta(0;r,r)=\{(z,w): \lvert z\rvert, \lvert w\rvert<r\}$. Moreover, $\Omega_n={F(n)}^{-1} \left(\Delta(0;r,r)\right) \subseteq {F(n+1)}^{-1} \left(\Delta(0;r,r)\right)=\Omega_{n+1}$, for all $n$ sufficiently large. To prove $\Omega$ to be Loewner it is sufficient to show that $\Omega_n$ can be stretched   to $\Omega_{n+1}$ via a holomorphic family of strictly increasing open sets, each of which biholomorphic to polydisc. In fact, for all $n\geq 1$,  that $F_n(\Delta(0;r,r))$ can be expanded to $\Delta(0;r,r)$ in the above-mentioned way  suffices the purpose. It is straightforward to see that for all $n\geq 1$, $\Delta(0;r,r)$ can be shrunk holomorphically to $F_n(\Delta(0;r,r))$, but a priori it is not clear whether this can be performed in a monotonic manner too. The key step in the proof of Thm.\ \ref{Loewner 1} is to construct a parametrized family of automorphisms $F_{n,t}$ for each $n\geq 1$ and for all $0\leq t \leq 1$ such that $F_{n,0}\equiv F_n$, $F_{n,1}\equiv \rm{Id}$ and $F_{n,t} (\Delta(0;r,r)) \subset \subset  F_{n,s} (\Delta(0;r,r))$, whenever $s>t$. If one looks carefully at the proof of Thm.\ \ref{Loewner 1}, then it becomes apparent that the particular form of $F_n$'s plays the most crucial role in constructing $F_{n,t}$'s with desired properties. Nevertheless, we can stretch the family of $F_n$'s a bit to higher dimensions. 

\medskip 
\no
{\it Let $a_n$'s be as before and let for $k\geq 3$,  $F_n(z_1,z_2, \ldots, z_k)=(z
_1^d+q_n(z_1)+ \eta_n z_k, \eta_n z_1, \ldots, \eta_n z_{k-1})$  with $1\leq \deg(q_n)\leq (d-1)$ such that the moduli of the coefficients of $q_n$ and $\eta_n$ are bounded by $a_n^{d^n}$.  Then the basin of attraction 
\[
\Omega=\{(z,w): F_n \circ \cdots \circ F_1(z,w)\rightarrow 0\}
\]
is a Loewner Short $\mathbb{C}^k$. }

\medskip 
\no 
If we consider any non-autonomous basin of attraction $\Omega$ of polynomial automorphisms of the form (\ref{shift}) (which is known to be Short $\mathbb{C}^k$ from \ \cite[Thm.\ 1.4]{F}), it is not clear how to construct the intermediate automorphisms $F_{n,t}$'s. Nevertheless, we believe that these $\Omega$'s are also Loewner. 

Next we prove that the sublevel sets of Green's functions of H\'{e}non maps are Loewner. As before the form of H\'{e}non maps plays the pivotal role.  That any sublevel set of the Green's function of a H\'{e}non map can be realized as the basin of attraction of a sequence of H\'{e}non maps with rapidly decaying coefficients (in other words,  hypotheses of Thm.\ \ref{Loewner 1} are satisfied) leads us to the following theorem.

\begin{thm}\label{Loewner 2}
Let  $H$ be a H\'{e}non map and let $G_H^+$ be the Green's function of $H$. Then for any $r>0$, the set 
\[
\Omega_r=\{z\in \mathbb{C}^2: G_H^+(z)<r\},
\]
 is a Loewner Short $\mathbb{C}^2$.
\end{thm}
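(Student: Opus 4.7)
My plan is to realize $\Omega_r$ (possibly after replacing $r$ by $d^k r$ via the functional equation $G_H^+\circ H=d\,G_H^+$) as the basin of attraction of a sequence of H\'enon automorphisms matching the template of Theorem~\ref{Loewner 1}, and then invoke that theorem. Write $H(z,w)=(p(z)+\delta w,z)$ with $p(z)=z^d+\sum_{i=0}^{d-1} c_i z^i$; after a preliminary affine conjugation of $\mathbb{C}^2$ (under which Loewner Short is preserved by push-forward of the exhaustion), I may assume at least one $c_i$ with $1\le i\le d-1$ is nonzero, so that the $q_n$ below have degree in $\{1,\ldots,d-1\}$. Since $\Omega_r=H^{-k}(\Omega_{d^k r})$ and $H^{-k}\in\mathrm{Aut}(\mathbb{C}^2)$, it suffices to treat the case of $r$ sufficiently large (depending on $H$).

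Set $\lambda_0:=e^r$, $\lambda_n:=\lambda_{n-1}^d=e^{r d^n}$, fix a square root $\sqrt{\delta}$, and define the linear automorphisms $D_n(z,w):=(\lambda_n z,\,(\lambda_n/\sqrt{\delta})w)$ along with $F_n:=D_n^{-1}\circ H\circ D_{n-1}$. A direct calculation gives
\[
F_n(z,w)=\bigl(z^d+q_n(z)+\delta_n w,\ \delta_n z\bigr),\quad q_n(z)=\sum_{i=0}^{d-1} c_i\,\lambda_{n-1}^{i-d}z^i,\quad \delta_n=\sqrt{\delta}\,\lambda_{n-1}^{\,1-d}.
\]
Because $\lambda_{n-1}=e^{rd^{n-1}}$, the moduli of all coefficients of $q_n$ and of $\delta_n$ are bounded by $C\,e^{-rd^{n-1}}$ for a constant $C=C(H)$. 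Choose $\rho\in(e^{-r/d},1)$ and set $a_n:=\rho$; since $\rho^d e^r>1$, the bound $\rho^{d^n}$ dominates $C\,e^{-rd^{n-1}}$ for every $n\ge 1$ once $r$ is sufficiently large. Thus $\{F_n\}$ meets the hypotheses of Theorem~\ref{Loewner 1}.

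For the basin identification, telescoping yields $F_n\circ\cdots\circ F_1=D_n^{-1}\circ H^n\circ D_0$, so $\Omega_{\{F_n\}}=D_0^{-1}(\mathcal{B})$ where $\mathcal{B}:=\{u\in\mathbb{C}^2:\lambda_n^{-1}\|H^n(u)\|\to 0\}$ (the $\sqrt{\delta}$-factor in the second coordinate is bounded, hence harmless). Using $G_H^+(u)=\lim_n d^{-n}\log^+\|H^n(u)\|$ and $\log\lambda_n=rd^n$, I check that $\lambda_n^{-1}\|H^n(u)\|\to 0$ when $G_H^+(u)<r$ and $\to\infty$ when $G_H^+(u)>r$. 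Since $G_H^+$ is psh and nonconstant on $\{G_H^+>0\}$, the maximum principle forces $\{G_H^+=r\}$ to have empty interior, so $\{G_H^+>r\}$ is dense near $\partial\Omega_r$; combined with openness of the nonautonomous basin, this excludes every boundary point, giving $\mathcal{B}=\Omega_r$. Theorem~\ref{Loewner 1} then shows $\Omega_{\{F_n\}}=D_0^{-1}(\Omega_r)$ is Loewner Short, and Definition~\ref{defn Loewner} is stable under the linear automorphism $D_0$ (push the exhaustion forward), giving the same conclusion for $\Omega_r$.

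The delicate point is the basin identification---particularly, excluding boundary points $\{G_H^+=r\}$---which combines openness of the nonautonomous basin with the max principle for psh functions. The rest is a routine scaling computation together with the clean reduction to large $r$ via the functional equation of $G_H^+$.
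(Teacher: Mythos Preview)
Your argument is correct and is genuinely cleaner than the paper's. Both proofs ultimately reduce to the polydisc--deformation technique underlying Theorem~\ref{Loewner 1}, but you invoke that theorem as a black box by manufacturing a sequence $F_n=D_n^{-1}\circ H\circ D_{n-1}$ that matches its hypotheses \emph{exactly}: the non-uniform scaling $D_n(z,w)=(\lambda_n z,(\lambda_n/\sqrt{\delta})w)$ is precisely what forces the same coefficient $\delta_n$ in front of $w$ and in the second coordinate, and the reduction to large $r$ via $G_H^+\circ H=d\,G_H^+$ lets you absorb the constant $C(H)$ into the bound $\rho^{d^n}$ for all $n\ge 1$ at once. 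The paper instead introduces an auxiliary sequence $r_n\uparrow r$ and indices $m(n)$, uses \emph{uniform} scalings $\varphi_j$, and factors each step into conjugated H\'enon maps $L_j=\varphi_j\circ H\circ\varphi_{j-1}^{-1}$; these $L_j$ do not have monic leading term, so the paper cannot cite Theorem~\ref{Loewner 1} directly and must re-run its proof. Your route avoids that redundancy; the paper's route, on the other hand, sidesteps the need for the affine conjugation and the $\sqrt{\delta}$ trick.

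One small point to tighten: the sentence ``$G_H^+$ is psh and nonconstant on $\{G_H^+>0\}$, so the maximum principle forces $\{G_H^+=r\}$ to have empty interior'' is not quite valid as stated (e.g.\ $\max(\log|z|,0)$ is psh, nonconstant, and has a level set with interior). What you actually need---and what is standard for H\'enon maps---is that $G_H^+$ is \emph{pluriharmonic} on $\{G_H^+>0\}$, hence real-analytic there, so it cannot be constant on any open subset without being constant on the whole (connected) escaping set. With that one word changed, your exclusion of the boundary $\{G_H^+=r\}$ from $\mathcal{B}$ via openness of the basin goes through exactly as you wrote it.
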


\medskip 
We conclude the article by constructing new examples of Short $\mathbb{C}^2$'s.  
\subsection*{Example 1:} Instead of a single H\'{e}non map,  we consider a random sequence of H\'{e}non maps $\{F_n\}_{n\geq 1}$ such that the moduli of the coefficients of $F_n$'s are uniformly bounded above. Then it follows from  \cite[Prop.\ 1.1]{PV} that the corresponding random Green's function 
\[
G^+_{\{F_n\}}(z,w):=\lim_{n\rightarrow \infty}\frac{1}{d^n} \log^+ \lVert F_n\circ \cdots \circ F_1(z,w)\rVert,
\]
for  $(z,w)\in \mathbb{C}^2$, exists where $d=\deg(F_n)$ for all $n\geq 1$ and $\log^+(x)=\max\{\log x,0\}$.
In spirit of  \cite[Thm.\ 1.12]{F} (which shows that the sublevel sets of the Green's function of a single H\'{e}non map are Short $\mathbb{C}^2$'s), we prove the following:

\begin{thm} \label{sublevel ran Green}
Let $\{F_n\}$ be a sequence of H\'{e}non maps of the form $F_n(z,w)=(z^d+q_n(z)+\delta_n w, z)$ where $q_n$'s are polynomials of degree at most $ (d-1)\geq 1$ and
$\delta_n\neq 0$, for all $n\geq 1$. Further, assume that  the moduli of the coefficients of $q_n$'s and $\delta_n$'s are uniformly bounded above.  Then for any $r>0$, the sublevel set
\[
\Omega_{\{F_n\},r}=\left\{(z,w)\in \mathbb{C}^2: G_{\{F_n\}}^+(z,w)<r \right\}
\]
is a Short $\mathbb{C}^2$.
\end{thm}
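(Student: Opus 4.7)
The plan is to mirror the argument of \cite[Thm.\ 1.12]{F} (which treats the autonomous H\'{e}non case) while adapting to the non-autonomous setting. Writing $\Phi_n := F_n \circ \cdots \circ F_1$, each $\Phi_n$ is a polynomial automorphism of $\mathbb{C}^2$, so for any Euclidean ball $B \subset \mathbb{C}^2$ the preimage $\Phi_n^{-1}(B)$ is biholomorphic to the unit ball. The bounded-above psh function required by the definition of Short $\mathbb{C}^2$ will be $G^+_{\{F_n\}}$ itself: by \cite[Prop.\ 1.1]{PV} it is the locally uniform limit of the continuous psh functions $d^{-n}\log^+\|\Phi_n\|$, hence continuous and psh on $\mathbb{C}^2$. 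It is bounded above by $r$ on $\Omega := \Omega_{\{F_n\}, r}$, and its non-constancy on the open set $\Omega$ will follow from continuity together with the fact that the uniform coefficient bounds force $G^+_{\{F_n\}}$ to grow unboundedly in the escape region.

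For the exhaustion by biholomorphic balls, fix a constant $K > \log 3/(d-1)$ and, for each $r' \in (0, r)$, set $\Omega_n^{r'} := \Phi_n^{-1}(B(0, e^{r' d^n - K}))$. The uniform bound on the coefficients of $q_n$ and $\delta_n$ yields $\|F_{n+1}(u)\| \leq 3\|u\|^d$ once $\|u\|$ is sufficiently large, and the choice of $K$ then guarantees $F_{n+1}(B(0, e^{r'd^n - K})) \subset B(0, e^{r'd^{n+1} - K})$; hence $\Omega_n^{r'} \subset \Omega_{n+1}^{r'}$ for $n$ large. Iterating this estimate and comparing with the defining limit of $G^+_{\{F_n\}}$ produces the sandwich $\{G^+_{\{F_n\}} < r'\} \subseteq \bigcup_n \Omega_n^{r'} \subseteq \{G^+_{\{F_n\}} \leq r'\}$, so the diagonal family $B_k := \Omega_k^{r - 1/k}$ is a strictly increasing sequence of domains each biholomorphic to the unit ball whose union is exactly $\Omega$.

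For Kobayashi vanishing, fix $(z,w) \in \Omega$ with $c := G^+_{\{F_n\}}(z,w) < r$, pick $r' \in (c, r)$, and let $v$ be a tangent vector. Monotonicity of the Kobayashi pseudometric together with the biholomorphism $\Phi_n : \Omega_n^{r'} \to B(0, e^{r'd^n - K})$ reduces the problem to estimating the quantity $\|v_n\|/(e^{r'd^n - K} - \|\Phi_n(z,w)\|)$, where $v_n := d\Phi_n|_{(z,w)} v$. The denominator behaves like $e^{r'd^n - K}$ for $n$ large since $c < r'$ forces $\|\Phi_n(z,w)\| = o(e^{r'd^n})$; and a telescoping product of the explicit derivative matrices $dF_k$, combined with the uniform coefficient bounds, yields $\|v_n\| \lesssim C^n e^{cd^n}\|v\|$, so the ratio decays like $C^n e^{(c - r')d^n}$, which tends to zero. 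The main technical obstacle is precisely this sharp derivative estimate: the double-exponential gain $e^{(c - r')d^n}$ from $c < r'$ must dominate all exponential-in-$n$ factors arising from the non-autonomous iteration, and arranging the telescoping bound for $\|d\Phi_n\|$ cleanly is where the uniform boundedness of the coefficients of $q_n$ and $\delta_n$ plays the decisive role.
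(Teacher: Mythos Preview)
Your proposal is correct and follows essentially the same route as the paper: both adapt \cite[Thm.~1.12]{F} by writing $\Omega_{\{F_n\},r}$ as an increasing union of sets $\Phi_n^{-1}\big(B(0,e^{r'd^n})\big)$ for suitable $r'<r$, where $\Phi_n=F_n\circ\cdots\circ F_1$, and then exploit the gap between $c=G^+_{\{F_n\}}(z,w)$ and $r'$ to force Kobayashi vanishing. The only superficial difference is that the paper packages the orbit estimates via the shifted Green's functions $G^+_{\{F_n\},m}$ and the transformation rule from \cite{PV}, and handles Kobayashi vanishing by constructing explicit discs $\Phi_{m(n)}^{-1}\circ\rho$ with arbitrarily large derivative at the basepoint, whereas you work directly with the raw bound $\|F_{n+1}(u)\|\le 3\|u\|^d$ and estimate the pushed-forward ball metric---these are dual formulations of the same estimate (your telescoping bound gives $\|v_n\|\lesssim C^n\|\Phi_n(z,w)\|\,\|v\|=C^n e^{(c+o(1))d^n}\|v\|$, which is all you need since $c<r'$).
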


\subsection*{Example 2:} For  $0<c<1$, let $F_n(z,w)=(z^2+c^{2^n} w, c^{2^n}  z)$ for all $n\geq 1$.
Then by \cite[Thm.\ 1.4]{F}, it follows that the non-autonomous basin of attraction of $F_n$'s at the origin is always a Short $\mathbb{C}^2$. The following theorem records that if we replace the coefficients of the linear terms of $F_n$'s by $c^{t_n^n}$ with $t_n$ converging to $2$, the corresponding basin of attraction is not necessarily always a Short $\mathbb{C}^2$.  The following theorem can also be compared with \cite[Thm.\ 1.10]{F}.
\begin{thm}\label{SC2 FB}
For $n\geq 1$, let $F_n(z,w)=(z^2+c^{t_n^n} w, c^{t_n^n}  z)$ where $t_n\rightarrow 2$ and $0<c<1$. Let 
\[
\Omega= \left\{(z,w): F_n \circ \cdots \circ F_1(z) \rightarrow 0\right\} 
\]
be the non-autonomous basin of attraction of the sequence of automorphisms $\{F_n\}$ at the origin. Then,  
\begin{enumerate}
\item [A.]
There exists a sequence $t_n \rightarrow 2$ such that $\Omega$ is a Short $\mathbb{C}^2$.
\item[B.] 
There exists a sequence $t_n \rightarrow 2$ such that $\Omega$ is a Fatou-Biberbach domain.\end{enumerate}
\end{thm}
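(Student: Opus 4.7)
This is a dichotomy: the same parametric family $F_n(z,w) = (z^2 + c^{t_n^n} w, c^{t_n^n} z)$ with $t_n \to 2$ can produce either a Short $\mathbb{C}^2$ or a Fatou--Bieberbach domain, depending on the choice of $\{t_n\}$. The controlling parameter is the behaviour of $t_n^n / 2^n$: the Short regime of \cite[Thm.\ 1.4]{F} corresponds to $t_n^n / 2^n$ being bounded below by a positive constant, whereas the Fatou--Bieberbach regime will arise when $t_n^n / 2^n \to 0$ sufficiently fast but the dynamics remains non-degenerate enough to admit a Rosay--Rudin style linearization.

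For Part A, I will take $t_n := 2 + 1/n$. Then $t_n^n = 2^n (1 + 1/(2n))^n$, and the factor $(1 + 1/(2n))^n$ is monotonically increasing to $\sqrt{e}$. Setting $a_n := c^{(1+1/(2n))^n}$, one gets $c^{t_n^n} = a_n^{2^n}$ with $\{a_n\}$ strictly decreasing in $(0,1)$ and bounded below by $c^{\sqrt{e}}$. The maps $F_n$ fit the template of equation~(\ref{shift}) with $d=2$ and coefficient moduli bounded by $a_n^{2^n}$. Therefore \cite[Thm.\ 1.4]{F} applies and $\Omega$ is a Short $\mathbb{C}^2$.

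For Part B, I will choose $t_n \to 2^-$ inductively so as to realize $\Omega$ as a Fatou--Bieberbach domain, in the spirit of \cite[Thm.\ 1.10]{F}. Fix an exhaustion $B(0, R_n) \nearrow \mathbb{C}^2$ of the target. Given $t_1, \ldots, t_{n-1}$ already chosen, I will view $F_n^{(t)}(z,w) := (z^2 + c^{t^n} w, c^{t^n} z)$ as a continuous one-parameter family in $t \in (2 - \eta_n, 2]$ with $\eta_n \to 0$. This provides a one-parameter family of candidate extensions of the biholomorphism built from $F_1, \ldots, F_{n-1}$ onto the larger ball $B(0, R_n)$. A continuity / intermediate-value argument on $t$ then selects $t_n \in (2 - \eta_n, 2]$ so that the normalized composition $F_n^{(t_n)} \circ F_{n-1} \circ \cdots \circ F_1$ realizes the desired extension. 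Taking the limit will produce a biholomorphism $\mathbb{C}^2 \to \Omega$, with $t_n \to 2$ by construction.

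The main obstacle will be the quantitative verification of the inductive step in Part B. The coefficient $c^{t^n}$ decays extremely rapidly as $t$ approaches $2$, pushing $F_n^{(t)}$ toward the degenerate map $(z,w) \mapsto (z^2, 0)$; this is precisely the degeneration responsible for the Short regime of Part A. One has to verify that on each shrinking interval $(2 - \eta_n, 2]$ there remains enough non-degeneracy for the Rosay--Rudin normalization to converge on the prescribed ball. This amounts to tracking how the normalized composition depends on $t$ and tuning $\eta_n$ to balance two competing tendencies: small enough to force $t_n \to 2$, but large enough to provide room to absorb the normalization error at each step.
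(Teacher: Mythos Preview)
Your Part~A is correct and in fact cleaner than the paper's own argument. With $t_n=2+1/n$ one has $c^{t_n^n}=a_n^{2^n}$ for $a_n=c^{(1+1/(2n))^n}$, and since $(1+1/(2n))^n$ increases to $\sqrt e$ the sequence $a_n$ is strictly decreasing in $(0,1)$; so \cite[Thm.\ 1.4]{F} applies directly. The paper instead chooses $t_n=2^{1-\epsilon_n/n}$ with $\epsilon_n$ decreasing and, rather than quoting \cite[Thm.\ 1.4]{F}, rebuilds the limiting psh function $\varphi=\lim_n\bigl(2^{-n}\log\varphi_n+\sum_{j>n}2^{-j}\log 2\bigr)$ by hand, using the condition $t_n^n\geq 2t_{n-1}^{n-1}$ to get $\varphi_{n+1}\leq 2\varphi_n^2$. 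Your shortcut is legitimate and loses nothing.

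Part~B, however, has a genuine gap. You describe an inductive Rosay--Rudin style selection of $t_n\in(2-\eta_n,2]$ via a ``continuity/intermediate-value argument,'' but you never say what real-valued functional is being tracked, what its values are at the endpoints of the interval, or what normalization $L_n$ you intend for the compositions. On the whole interval $(2-\eta_n,2]$ the coefficient $c^{t^n}$ is already super-exponentially small, so there is no ``non-degenerate'' endpoint to play off against the degenerate one; an IVT mechanism needs two regimes to interpolate between, and you have not identified them. You correctly flag this as the main obstacle but do not resolve it. The paper's route is completely different and concrete: it takes $t_n=2^{1-\epsilon_n/n}$ with $\epsilon_n\nearrow\infty$, $\epsilon_n/n\to 0$, and $\epsilon_{n+1}\geq\epsilon_n+(2^{n/2}\ln 2)^{-1}$, then proves (Lemmas~\ref{FB1}--\ref{FB3}) the contraction estimate $\delta_{n+k}\leq a_{n+k+1}b^k$ for all $k\geq 0$ and all large $n$, together with the product bound $a_1\cdots a_n\leq c^{n/2}a_{n+1}$. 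These two quantitative inputs are exactly what is needed to feed into \cite[Lemma~3.13]{F} and conclude that $\Omega$ is Fatou--Bieberbach. If you want to salvage your approach you would need, at minimum, to specify the normalization, exhibit the quantity to which IVT is applied, and prove an estimate guaranteeing uniform convergence of the normalized compositions on compacta; absent that, Part~B is not a proof.
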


\subsection*{Acknowledgements:} The present article was initiated during the 2020 Complex Dynamics conference at the CIRM-Luminy, France. Both authors are grateful to CIRM for providing local hospitality during the conference. The second author would like to thank Koushik Ramachandran and Sivaguru Ravisankar for partially supporting her travel to CIRM. The second author was supported by National Board of Higher Mathematics postdoctoral fellowship.


\section{Volume of Short $\mathbb{C}^k$'s: Proof of Thm. \ref{volume}}
\begin{proof}
Let $\Omega=\cup_{i=1}^\infty\Omega_i.$ Let $ B(0,1)$ be the unit ball in $\mathbb{C}^2$.  We choose biholomorphisms $\phi_i: B(0,1)\rightarrow \Omega_i$. We can assume that  $0\in \Omega_1$ and hence in all $\Omega_i$'s. After pre-composing with automorphisms of the unit ball, we can assume that $\phi_i(0)=0$ for all $i.$
		
		Pick an $\epsilon>0.$ Let $\xi$ denote a tangent vector at $0$ of length $1.$ Since the Kobayashi metric of $\Omega$ vanishes at $0$ identically, there exists a holomorphic map $\psi:\Delta\rightarrow \Omega$ so that $|\psi'(0)|=\lambda \xi$ for some constant $\lambda> \frac{4}{\epsilon}$, where $\Delta$ is the unit disc in $\mathbb{C}$. Hence there is a holomorhic map $\psi_1:\overline{\Delta}\rightarrow \Omega$ such that $\psi_1'(0)=\lambda \xi/2$. Then the image is a compact subset of some $\Omega_i$.  After small rotation we then know that there is a relatively open neighbourhood of $\xi$ in the unit sphere so that for each $\xi'$ in this neighbourhood there is a map $f:\Delta\rightarrow \Omega_i$ so that $f(0)=0, f'(0)=\mu \xi'$ with $\mu>2/\epsilon.$ This then is true for all larger $i$. Hence by compactness of the unit sphere, there is an integer $j$ so that for any tangent vector $\xi$ at zero of length one, there is a holomorphic map $f:\Delta\rightarrow \Omega_j$ so that $f(0)=0$ and  $f'(0)=\lambda \xi$ for some $\lambda>2/\epsilon.$   This is true for any large $j$.
		
Next consider the composite maps $g=\phi_j^{-1}\circ f.$ These are holomorphic maps from the unit disc to the unit ball. Moreover, $g(0)=0$ and 
$$
g'(0)=(\phi_j^{-1})'(f(0))f'(0)=(\phi_j^{-1})'(f(0))(\lambda \xi).
$$
Then by Schwarz lemma it follows that 
$$
\|(\phi_j^{-1})'(f(0))(\lambda \xi)\|\leq 1, \text{ i.e., } \|(\phi_j^{-1})'(f(0))( \xi)\|\leq 1/\lambda<\epsilon/2.
$$
		
Let $\tilde\xi$ be the unit vector in the direction of $(\phi_j^{-1})'(f(0))( \xi)$. Then $(\phi_j^{-1})'(f(0))( \xi)=\sigma \tilde{\xi}$ and $\sigma<\epsilon/2.$ Therefore $\phi_j'(0)(\tilde{\xi})=\frac{\xi}{\sigma}.$
Thus it follows that $\|\phi_j'(0)(\tilde{\xi})\|\geq \frac{2}{\epsilon}$. But as $\xi$ runs over all unit tangent vectors, the same is true for the vectors $\tilde{\xi}.$ Therefore, we have shown: For any large enough $j,$ we have that if $\tilde{\xi}$ is any unit tangent vector at the origin, then $\|\phi_j'(0)(\tilde{\xi})\|\geq \frac{2}{\epsilon}.$
		
Let $A$ be a 2 by 2 matrix so that $\|A(x)\|\geq 2\|x\|/\epsilon$ for every vector $x.$ Then the inverse matrix $B$ satisfies $\|B(y)\|\leq \epsilon\|y\|/2$. Hence each entry in the matrix of $y$ has size at most $\epsilon/2$. Then the Jacobian of the matrix can be at most $\epsilon^2/2.$ But this implies that the Jacobian of $A$ is at least $2/\epsilon^2.$
		
To finish the proof we only need to show that there is 
no non-trivial $L^2$ holomorphic function on $\Omega.$ In particular this shows that the function  $f\equiv 1$ is not in $L^2(\Omega)$ which implies that  the volume of $\Omega$ is infinite. So assume to the contrary that there is such an $f$ on $\Omega.$ We could have chosen the point $0$ differently, so we can assume that $f(0)=c\neq 0.$ Then for every $i,$ we have that $\int_{\Omega_i} |f|^2<1.$ By the change of variable formula this shows that 
$$
\int_{\mathbb B(0,1)}\left \lvert(f\circ \phi_i)\times {\mbox{Jac}}(\phi_i)\right\rvert^2 dV<1.
$$
Let $g_i=(f\circ \phi_i)\times {\mbox{Jac}}(\phi_i)$. Then $g_i$ is a holomorphic function on the unit ball
with $|g_i(0)|\geq 2|c|/\epsilon^2$ 
		and $\int_{\mathbb B(0,1)}|g_i|^2<1$ for all large enough $i.$
		
	After writing $g_i(z)=g_i(0)+{\rm{h.o.t.}}$, if we estimate the integral of $|g_i|^2$ we get an estimate from below by integrating only the constant term. This gives a contradiction if we choose $\epsilon$ small enough. 
\end{proof}


\section{Loewner Short $\mathbb{C}^k$'s}

\subsection*{Proof of Theorem \ref{Loewner 1}}
By Thm.\ 1.4 in \cite{F}, it follows that $\Omega$ is a Short $\mathbb{C}^2$.  Now we prove that $\Omega$ is Loewner.  
\subsection*{Test Case:}
To make the idea of the proof transparent, we first deal with the simplest case:  Namely, $\Omega$ is non-autonomous basin of attraction of quadratic H\'{e}non maps $F_n$'s where $F_n(z,w)=(z^2+ a_n^{2^n} w,a_n^{2^n} z)$ with $0<a_{n+1} \leq a_n<1$, for all $n\geq 1$. 

For $r>0$ sufficiently small,  let $\Delta^2(0;r,r)$ be the bidisc of radius $r$ and 
$$
\Omega_n=F_1^{-1} \circ \cdots \circ F_n^{-1}(\Delta^2(0;r,r)).
$$ 
It follows from Thm 1.4 in \cite{F} that $\Omega_n \subseteq \Omega_{n+1}$ and  $\Omega= \cup_{n=1}^\infty \Omega_n$. We show that for any $n\geq 1$, $F_n(\Delta^2(0;r,r))$ can be deformed holomorphically and monotonically  to $\Delta^2 (0;r,r)$. 

For $0\leq t \leq 1$, define $\Delta^2_t=\Delta^2\left(0,(1+t)r, (1+2t)r\right)$ and $F_{n,t}$ on $\Delta^2_t$ as follows:  
$$
F_{n,t}(z,w)=((1-t)z^2+[a_n^{2^n}+rt]w,a_n^{2^n}z).
$$
Note that when $t=0,$ then $F_{n,0}\equiv F_n$ and $\Delta^2_0=\Delta^2(0,r,r)$.  For $t=1$,  $F_{n,1}(z,w)= \left((a_n^{2^n}+r)w,a_n^{2^n}z\right)$
 and the image of $\Delta^2_1$ under the map $F_{n,1}$ becomes $\Delta^2 \left(0,(a_n^{2^n}+r)3r,a_n^{2^n}2r\right)$. 
 
 First we show that as $t$ increases, the images of $\Delta^2_t$'s under the map $F_{n,t}$'s strictly increase, i.e.,
 if $0\leq t<s\leq 1,$ then
 \begin{equation}\label{subset}
 F_{n,t}(\Delta^2_t)\subset\subset F_{n,s}(\Delta^2_s).
 \end{equation}
 Fix a $z_0$ with $|z_0|<(1+t)r.$ Now we consider the vertical slice
 $L_{z_0}=\{w\in \mathbb{C}: |w|<(1+2t)r\}$. The image 
 $$
 F_{n,t}(L_{z_0})=\left\{\left((1-t)z_0^2+[a_n^{2^n}+rt]w,a_n^{2^n}z_0\right): |w|<(1+2t)r \right\},
 $$ 
 is the horizontal disc at height $a_n^{2^n}z_0$ with center $(1-t)z_0^2$ and radius $(a_n^{2^n}+rt)(1+2t)r$.
 The image of $L_{z_0}$  under the map $F_{n,s}$ gives the  horizontal disc with center $(1-s)z_0^2$ and radius $(a_n^{2^n}+rs)(1+2s)r$ at the same height. The condition of strict monotonicity is the following:
 $$
 (1-s){\lvert z_0 \rvert}^2+(a_n^{2^n}+rs)(1+2s)r>(1-t){\lvert z_0\rvert }^2+(a_n^{2^n}+rt)(1+2t)r,
 $$
 which is equivalent to
 $$
 (a_n^{2^n}+rs)(1+2s)r-(a_n^{2^n}+rt)(1+2t)r>(1-t){\lvert z_0\rvert}^2-(1-s){\lvert z_0\rvert}^2,
 $$
 i.e. to
 $$
 [a_n^{2^n}(1+2s)r-a_n^{2^n}(1+2t)r]+[r^2s(1+2s)-r^2 t(1+2t)]>(s-t) {\lvert z_0\rvert}^2.
 $$
 Thus it suffices to prove that
 $$
 r^2s(1+2s)-r^2 t(1+2t)>(s-t)|z_0|^2,
 $$
  which is equivalent to 
  $$
  r^2(s-t)+2r^2(s^2-t^2)>(s-t)|z_0|^2,
  $$
  in other words, to
  $$
  r^2(1+2s+2t)>|z_0|^2.
  $$
  Now since $|z_0|<(1+t)r$ and $s>t$, it suffices to prove that $(1+4t)>(1+t)^2$ which is clearly true since $t<1.$ 
  
 For $0\leq t \leq 1$, let $\Omega_t=F_{n,t}(\Delta^2_t)$. We now show that $\Omega_t=\bigcup_{t'<t} \Omega_{t'}$. 
 Since (\ref{subset}) holds,  clearly $\bigcup_{t'<t} \Omega_{t'} \subseteq \Omega_t$.  To prove the reverse containment, let $(z_0,w_0)\in \Omega_t$. Then 
 $$
(z_0,w_0)=F_{n,t}(z,w)=\left((1-t)z^2+[a_n^{2^n}+rt]w,a_n^{2^n}z\right)
$$
for some $(z,w)\in \Delta^2_t$. Thus $z={w_0}/{a_n^{2^n}}$  and $(1-t)z^2+[a_n^{2^n}+rt]w=z_0$. If there exists $t'<t$ such that $F_{n,t'}(z',w')=(z_0,w_0)$ for some $(z',w')\in \Delta^2_{t'}$, then 
\[
(1-t)z^2+[a_n^{2^n}+rt]w=(1-t')z^2+[a_n^{2^n}+rt']w',
\]
i.e.,
\[
(t'-t)z^2+(a_n^{2^n}+rt)w=(a_n^{2^n}+rt')w'
\]
which is equivalent to 
\[
w'=w+\frac{(t-t')}{(a_n^{2^n}+rt')}[rw-z^2].
\]
To check the existence of some $w'$ with $\lvert w'\rvert <(1+2t')r$, it is sufficient to check if $t'<t$ can be chosen so that 
\[
\lvert w\rvert+\frac{(t-t')}{(a_n^{2^n}+rt')}\left[r\lvert w\rvert+{\lvert z\rvert}^2\right]<(1+2t')r,
\]
i.e., 
\begin{equation} \label{tt'}
\lvert w\rvert \left(1+\frac{(t-t')r}{(a_n^{2^n}+rt')} \right)+\frac{(t-t')}{(a_n^{2^n}+rt')} {\lvert z\rvert}^2 < (1+2t')r.
\end{equation}
To check whether $t'<t$ exists so that (\ref{tt'}) holds, it is enough to see whether the  following holds for $t'$ sufficiently close to $t$: 
\begin{equation} \label{tt'1}
\lvert w\rvert \left(1+\frac{t-t'}{a_n^{2^n}} \right)+\frac{(t-t')}{a_n^{2^n}} {\lvert z\rvert}^2 < (1+2t')r.
\end{equation}
Now there exists $\tilde{t}<t$ such that $\lvert w\rvert <(1+2 \tilde{t})r$. Thus it is possible to choose  $t'<t$ such that (\ref{tt'1}) holds. This proves 
\[
\Omega_t=\bigcup_{t'<t} \Omega_{t'}.
\]

Next we prove that $\overline{\Omega}_t = \bigcap_{t'>t} \Omega_{t'}$. That $\overline{\Omega}_t \subseteq  \bigcap_{t'>t} \Omega_{t'}$ follows from (\ref{subset}). Now let $(z_0,w_0)\in \Omega_{t'}$ for all $t'>t$. Therefore, 
\[
(z_0,w_0)=\left((1-t)z_{t'}^2+[a_n^{2^n}+rt]w_{t'},a_n^{2^n}z_{t'}\right)
\]
for some $(z_{t'},w_{t'})\in \Delta^2_{t'}$. This implies $z=z_{t'}={w_0}/{a_n^{2^n}}$ for all $t'>t$ and 
\[
(1-t')\frac{w_0^2}{a_n^{2^{n+1}}}+\left( a_n^{2^n}+rt'\right)w_{t'}=z_0,
\]
i.e.,
\[
w_{t'}=\left[ z_0-(1-t')\frac{w_0^2}{a_n^{2^{n+1}}}\right]\frac{1}{\left( a_n^{2^n}+rt'\right)}
\]
for all $t'>t$. Now clearly $(z,w_t)=\lim_{n\rightarrow \infty} (z,w_{t'}) \in \overline{\Delta_t^2}$  and $F_{n,t}(z,w_t)=(z_0,w_0)$. Therefore $\bigcap_{t'>t} \Omega_{t'} \subseteq \overline{\Omega}_t$.
  
 Till now we proved that for all $n\geq 1$, $\Omega_0=F_n(\Delta^2(0;r,r))$ can be expanded monotonically to $\Omega_1=\Delta^2 \left(0,(a_n^{2^n}+r)3r,a_n^{2^n}2r\right)$ via a holomorphically varying family of domains $\Omega_t$ for $0\leq t \leq 1$ that satisfies the properties enlisted in Definition \ref{defn Loewner}. Now it is easy to see that the bidisc $\Omega_0'=\Delta^2 \left(0,(a_n^{2^n}+r)3r,a_n^{2^n}2r\right)$ can be expanded to the bidisc $\Omega_1'=\Delta^2(0;r,r)$  monotonically  and holomorphically via bidisc $\Omega_t'=\left(0;tr +(1-t)(a_n^{2^n}+r)3r, tr+(1-t)a_n^{2^n}2r\right)$ for $0\leq t' \leq 1$. Further, this family of bidiscs satisfies the properties indicated in Definition \ref{defn Loewner}.  Therefore, for each $n\geq 1$, we showed existence of domains $\{\Omega_{n,t}\}_{0\leq t' \leq 1}$ such that $\Omega_{n,0}=F_n\left(\Delta^2(0;r,r)\right)$ and $\Omega_{n,1}=\Delta^2(0;r,r)$. Further $\Omega_{n,t} \subset \subset \Omega_{n,s}$ if $0\leq t <s \leq 1$, $\Omega_{n,t}=\bigcup_{t'<t} \Omega_{n,t'}$ and $\overline{\Omega}_{n,t} =\bigcap_{t'>t} \Omega_{n,t'}$. In turn for each $n$, we have 
\begin{eqnarray}\label{Omega}
&&\Omega_{n-1}=F_1^{-1} \circ \cdots \circ F_n^{-1}(F_n(\Delta^2(0;r,r)))\subseteq \cdots \subseteq F_1^{-1} \circ \cdots \circ F_n^{-1}(\Omega_{n,t}) \nonumber\\
&\subseteq & 
\cdots \subseteq F_1^{-1} \circ \cdots \circ F_n^{-1}(\Omega_{n,s})
\subseteq \cdots \subseteq F_1^{-1} \circ \cdots \circ F_n^{-1}((\Delta^2(0;r,r)))=\Omega_n.
\end{eqnarray}
This proves $\Omega$ is Loewner when for each $n\geq 1$, $\deg(F_n)=2$ and $a_n$ is real.

\subsection*{General Case}
For each $n\geq 1$, 
$
F_n(z,w)=\left(p_n(z)+\delta_n w, \delta_n z\right),
$
where $p_n(z)=z^d+q_n(z)$  with  $q_n$ a polynomial of degree $d-1$. 
For each $n\geq 1$, we modify $F_n$ as follows. Let $0\leq t_n < 2\pi$ be such that $\delta_n'=e^{it_n}\delta_n >0$. Define
\[
\tilde{F}_n(z,w)=\left(\tilde{p}_n(z)+\delta_n' w, \delta_n' z\right)
\]
where $\tilde{p}_n(z)=p_n\left(\left({\delta_n'}/{\delta_n}\right)z \right)$. 
 As before we can show that for any $n\geq 1$, the bidisc $\Delta^2(0;r,r)$ can be deformed to $\tilde{F}_n(\Delta^2(0;r,r))$.  Choose $M>> 2^d+{d}/{r^d}$. For each $n\geq 1$, define $F_{n,t}$ on $\Delta^2_t=\Delta^2\left(0,(1+t)r, (1+2t)r\right)$ as follows:
\[
\tilde{F}_{n,t}(z,w)=\left((1-t)\tilde{p}_n(z)+[\delta_n' + M r^{d-1}t]w,\delta_n' z \right).
\]
As before, for  fix a $z_0$ with $\lvert z_0\rvert<(1+t)r$,  the condition of strict monotonicity, i.e., for $0\leq t<s \leq 1$,  $\tilde{F}_{n,t}(\Delta^2_t) \subset \subset \tilde{F}_{n,s}(\Delta^2_s)$ is 
\[
\left(\delta_n' + M r^{d-1}t\right)(1+2t)r+(s-t)\lvert \tilde{p}_n(z_0)\rvert <\left(\delta_n' + M r^{d-1}s\right)(1+2s)r,
\]
which is equivalent to showing
\[
\delta_n'\left[(1+2s)-(1+2t)\right]r+Mr^{d-1}s(1+2s)r-Mr^{d-1}t(1+2t)r > (s-t)\lvert \tilde{p}_n(z_0)\rvert.
\]
Thus it is sufficient to show 
\[
Mr^d(s-t)+2Mr^d(s^2-t^2) > (s-t)\lvert \tilde{p}_n(z_0)\rvert,
\]
or equivalently, 
\begin{equation} \label{M}
Mr^d(1+2s+2t)>\lvert \tilde{p}_n(z_0)\rvert.
\end{equation}
Now since $\lvert z_0\rvert<(1+t)r$, $\lvert \tilde{p}_n(z_0)\rvert< {(1+t)}^d r^d+a_n^{2^n}\left({(1+t)}^{d-1} r^{d-1}+\cdots +(1+t)r+1\right)<2^d r^d+d$. Thus clearly (\ref{M}) holds.

Thus $\tilde{F}_{n,1}(\Delta^2(0;r,r))$ and thus $\Delta^2(0;r,r)$ can be deformed holomorphically and monotonically to $\tilde{F}_{n,0}(\Delta^2(0;r,r))$. We claim that $\tilde{F}_{n,0}(\Delta^2(0;r,r))={F}_{n,0}(\Delta^2(0;r,r))$. Fix a  $z_0$ with $\lvert z_0\rvert<r$, then $F_n(z_0,w)=\tilde{F}_n\left(({\delta_n}/{\delta_n'}) z_0, {(\delta_n}/{\delta_n'}) w\right)$. Thus $ {F}_{n,0}(\Delta^2(0;r,r))  \subseteq\tilde{F}_{n,0}(\Delta^2(0;r,r))$. Similarly it follows that $\tilde{F}_{n,0}(\Delta^2(0;r,r))\subseteq {F}_{n,0}(\Delta^2(0;r,r))$.  
The rest of the proof follows as before. 

\subsection*{Proof of Thm.\ \ref{Loewner 2}}

We complete the proof in three steps. 

\no 
{\it Step 1:} Let $\deg(H)=d$ and let $r_n \uparrow r$ as $n \rightarrow \infty$. Let $\delta<1$ and for each $n\geq 1$, let us choose $m(n)\geq n$ such that 
\begin{equation} \label{exp}
e^{\left(r_{n+1}-r_{n+2}\right)d^{m(n)}} < \frac{\delta}{n}.
\end{equation}
Let ${\lVert(z,w)\rVert}_1=\max\{\lvert z\rvert, \lvert w\rvert\}$.
For each $n\geq 1$, set
\[
\Omega_n=\left\{(z,w)\in \mathbb{C}^2: {\lVert H^{m(n)}(z,w)\rVert}_1 < e^{r_{n+1} d^{m(n)}}
\right\}.
\]
Now since (\ref{exp}) holds, 
\begin{eqnarray} \label{log}
&&(r_{n+1}-r_{n+2}) d^{m(n)} < \log\left(\frac{\delta}{n}\right)<-\log n \nonumber \\
&\Rightarrow& (r_{n+2}-r_{n+1}) > \frac{\log n}{d^{m(n)}} \nonumber\\
&\Rightarrow& r_{n+1}+\frac{\log n}{d^{m(n)}} < r_{n+2}.
\end{eqnarray}

\medskip 
\no 
{\it Claim: $\Omega_r$ is increasing union of $\Omega_n$'s}. 

\medskip 
\no 
There exists $L>1$ such that 
\begin{equation}\label{Green est}
G_H^+(z,w) \leq \log^+{ \lVert (z,w)\rVert}_1 +L
\end{equation}
for all $(z,w)\in \mathbb{C}^2$. 
Now 
\[
\frac{1}{d^{m(n)}} \log^+{ \lVert H^{m(n)}(z,w)\rVert}_1 < r_{n+1}
\]
for all $(z,w)\in \Omega_n$.
Thus by (\ref{Green est}), 
\begin{eqnarray}\label{Green est 1} 
&& G_H^+ \circ H^{m(n)}(z,w) \leq \log^+ {\lVert H^{m(n)}(z,w)\rVert}_1 +L \nonumber \\
&\Rightarrow& G_H^+(z,w) \leq \frac{1}{d^{m(n)}} \log^+ {\lVert H^{m(n)}(z,w)\rVert}_1 + \frac{L}{d^{m(n)}}< r_{n+2 } <r. 
\end{eqnarray}
The second last inequality follows from (\ref{log}). 
 We can choose $m(n+1)$ large enough such that 
\[
\frac{1}{d^{m(n+1)}} \log^+ {\lVert H^{m(n+1)}(z,w)\rVert }_1<  \frac{1}{d^{m(n)}} \log^+ {\lVert H^{m(n)}(z,w)\rVert }_1+ \frac{L'}{d^{m(n)}} <r_{n+2}<r
\]
for some $L'>0$ and for all $(z,w)\in \Omega_n$.
Therefore, 
\[
\Omega_1 \subseteq \Omega_2 \subseteq \subset \cdots  =\cup_{n=1}^\infty \Omega_n \subseteq \Omega_r. 
\]
To prove the converse, let $G_H^+(z,w) <r$. This implies $G_H^+(z,w)<r_k$ for some $k\geq 1$. Therefore, 
\[
\frac{1}{d^n} \log^+ {\lVert H^n(z,w)\rVert}_1 <r_k
\]
for all $n\geq n_0$. Therefore, 
$(z,w)\in \Omega_n$ for some large $n$. Thus the claim follows.
 
 \medskip 
 \no 
 { \it Step 2:} Note that for each $n\geq 1$, the global map 
 $\Phi_n =\varphi_{m(n)} \circ H^{m(n)}$ maps  $\Omega_n$ to  $\Delta^2(0;1,1)$  where $\varphi_{m(n)}(z,w)=\left(e^{-r_{n+1}d^{m(n)}}z, e^{-r_{n+1}d^{m(n)} }w\right)$.
 Now we claim that
 \begin{equation*}
 \Omega_r=\{(z,w)\in \mathbb{C}^2: \Phi_n(z,w)\rightarrow 0 \text{ as } n\rightarrow \infty\}.
 \end{equation*}
 For any $(z,w)\in \Omega_{r}$, there exists $n_0$ large enough such that $(z,w)\in \Omega_n$ for all $n\geq n_0$.
 Now if $(z,w)$ is in the non-escaping set $K_H^+=\{(z,w)\in \mathbb{C}^2: \lVert H^n(z,w)\rVert <M \text{ for all } n\geq 1\}$, then clearly, $\Phi_n(z,w) \rightarrow 0$ as $n\rightarrow \infty$. Let $(z,w)\notin K_H^+$ and let $(z,w)\in \Omega_n$ for all $n\geq n_0$, then there exists a large natural number, without loss of generality, $m(n_0)$ say, such that $H^{m(n_0)}(z,w)\in V_R^+$ where $V_R^+=\left\{(z,w)\in \mathbb{C}^2: \lvert z \rvert \geq \max\{\lvert w\rvert,R\}\right\}$. Now
 \begin{align*}
 {\left \lVert H^{m(n)}(z,w)\right\rVert}_1={\left\lVert H^{m(n)-m(n_0)}\left(H^{m(n_0)}(z,w)\right)\right\rVert}_1 \sim {\left\lvert {\left(H^{m(n_0)}(z,w)\right)}_1\right\rvert}^{d^{m(n)-m(n
 _0)}}\\
   <{ \left(e^{r_{n_0+1} d^{m(n_0)} }\right)}^{d^{m(n)-m(n_0)}}=e^{r_{n_0+1}d^{m(n)}}.
 \end{align*}
 Thus 
 \[
\left\lVert \Phi_n(z,w)\right\rVert= \left\lVert\varphi_{m(n)}\circ H^{m(n)}(z,w)\right\rVert <e^{\left(-r_{n+1}+r_{n_0+1}\right)d^{m(n)}} \rightarrow 0 
 \]
 as $n\rightarrow \infty$. Conversely, it is straightforward to see that if $\Phi_n(z,w)\rightarrow 0$ as $n\rightarrow \infty$, then $(z,w)\in \Omega_r$.

Now without loss of generality, we assume that $\Delta^2(0;c,c) \subseteq \Omega_r$, for $c>0$ sufficiently small. Let $\Phi_n(z,w)\in \Delta^2(0;c,c)$. This implies 
\[
{\left\lVert H^{m(n)}(z,w)\right\rVert}_1 < c e^{r_{n+1}d^{m(n)}}.
\]
Therefore combining (\ref{log}) and (\ref{Green est 1}), we get
\begin{eqnarray*}
&& \frac{1}{d^{m(n)}} \log^+ {\left\lVert H^{m(n)}(z,w)\right\rVert}_1 < \frac{\log c}{d^{m(n)}}+r_{n+1}\\
&\Rightarrow& \frac{1}{d^{m(n+1)}} \log^+ {\left\lVert H^{m(n+1)}(z,w)\right\rVert}_1 <\frac{\log c}{d^{m(n)}}+r_{n+1}+\frac{L'}{d^{m(n)}} < \frac{\log c}{d^{m(n+1)}}+r_{n+2}.
\end{eqnarray*}
Thus $\Phi_{n+1}(z,w)\in \Delta^2(0;c,c)$. This shows $\Phi_n^{-1}(\Delta^2(0;c,c)) \subseteq \Phi_{n+1}^{-1}(\Delta^2(0;c,c))$ for all $n$. Also we have $\Omega_r =\bigcup_{n=0}^\infty \Phi_n^{-1}(\Delta^2(0;c,c)) $

 
 \medskip 
 \no 
 {\it Step 3:}
 Without loss of generality we further assume  that $\Delta^2(0;1,1)\subseteq \Omega_r$. Let $\Phi_0 \equiv \rm{Id}$. Then 
 \[
 \Phi_n=(\Phi_n \circ \Phi_{n-1}^{-1})\circ (\Phi_{n-1} \circ \Phi_{n-2}^{-1})\circ \cdots \circ (\Phi_2 \circ \Phi_{1}^{-1}) \circ (\Phi_1\circ \Phi_0^{-1})\circ \Phi_0=F_n \circ \cdots \circ F_1\circ F_0
 \]
 where $F_n=\Phi_n \circ \Phi_{n-1}^{-1}=\varphi_{m(n)} \circ H^{m(n)-m(n-1)}\circ \varphi_{m(n-1)}^{-1}$ for $n\geq 1$ and $F_0\equiv \rm{Id}$. Note that $F(n)^{-1}(\Delta^2(0;c,c)) \subseteq F(n+1)^{-1}(\Delta^2(0;c,c))$ where $F(n)=F_n \circ \cdots \circ F_1\circ F_0$ for all $n$. Further, 
 \[
 \Omega_r= \bigcup_{n=0}^\infty F(n)^{-1}\left(\Delta^2(0;c,c)\right).
 \]
 Now we show that for each $n$, $\Delta^2(0;c,c)$ can be distorted monotonically to $F_n(\Delta^2(0;c,c))$ which in turn proves that $\Omega_r$ is Loewner as in Thm.\ \ref{Loewner 1}.
 For each $n$, consider arbitrary sequence of real numbers satisfying
 \[
 r_n=r_{m(n-1)} < r_{m(n-1)+1} <  r_{m(n-1)+2} < \cdots < r_{m(n)-1}<r_{m(n)}=r_{n+1}.
 \]
Further let
 \[
 \varphi_{m(n-1)+k}(z,w)=\left(e^{-r_{[m(n-1)+k]}d^{[m(n-1)+k]}}z, e^{-r_{[m(n-1)+k]}d^{[m(n-1)+k]}}w\right)
 \]
Now note that 
 \begin{eqnarray*}
 && \varphi_{m(n)} \circ H^{m(n)-m(n-1)}\circ \varphi_{m(n-1)}^{-1} \\ 
 &=& \left[\varphi_{m(n)} \circ H \circ \varphi_{m(n)-1}^{-1} \right]\circ \left[\varphi_{m(n)-1} \circ H \circ \varphi_{m(n)-2}^{-1} \right]\circ \cdots \circ \left[\varphi_{m(n-1)+1} \circ H \circ \varphi_{m(n-1)}^{-1} \right]\\
 &=& L_{m(n)}\circ  L_{m(n)-1} \circ \cdots \circ L_{m(n-1)+1}.
 \end{eqnarray*}
 Now as in Thm.\ \ref{Loewner 1}, one can show that $\Delta^2(0;c,c)$ can be distorted monotonically to $L_{j}(\Delta^2(0;c,c))$'s and therefore, $\Delta^2(0;c,c)$ can be distorted monotonically to $ \varphi_{m(n)} \circ H^{m(n)-m(n-1)}\circ \varphi_{m(n-1)}^{-1}(\Delta^2(0;c,c))=F_n(\Delta^2(0;c,c))$. This completes the proof.
 
 



\section{Examples of Short $\mathbb{C}^2$'s}
\subsection*{Proof of Theorem \ref{sublevel ran Green}}
The proof follows a similar line of arguments as in Thm.\ 1.12 in \cite{F}. We start the proof with a claim.
 
 \no 
{\it Claim:} For any compact set $K\subseteq \Omega_{\{F_n\},r} $   and for any $\epsilon>0$, there exist an open set $U\subseteq \Omega_{\{F_n\},r}$ and an automorphism $\Phi$ of $\mathbb{C}^2$ such that $\Phi(U)= B(0,1)$ and $\Phi(K)\subseteq B(0;\epsilon)$ where $B(0;1)$ and $B(0;\epsilon)$ are balls with center at the origin and of radius $1$ and $\epsilon$, respectively.

We denote the random Green's function corresponding to the sequence of H\'{e}non maps $\{F_n\}_{n\geq m}$ by $G^+_{\{F_n\},m}$ and following the notation in the introduction of this article we have $G^+_{\{F_n\},1} \equiv G^+_{\{F_n\}} $.  By Prop.\ 1.1 in \cite{PV}, we have 
\begin{equation}\label{PV Green}
G^+_{\{F_n\},m+1}\circ F_m \circ \cdots \circ F_1 (z,w)=d^m G_{\{F_n\}}^+(z,w),
\end{equation}
for all $(z,w)\in \mathbb{C}^2$ and for all $m\geq 0$. Further, there exists $L>1$ such that 
\begin{equation}\label{PV est}
G^+_{\{F_n\},m}(z,w) \leq  \log^+ \lVert (z,w) \rVert + L,
\end{equation}
for all $(z,w)\in \mathbb{C}^2$ and for all $m\geq 1$.
Now choose $0<r_1< r_2 <r$ such that $K \subseteq \Omega_{\{F_n\},r_1}$. Further, choose $n$ sufficiently large such that $e^{(r_1-r_2)d^n}<\epsilon$ and set
\[
U=\left\{(z,w)\in \mathbb{C}^2: (z,w)\in F_1^{-1}\circ \cdots \circ F_n^{-1}\left(B\left(0,e^{r_2 d^n}\right)\right)\right\} \subseteq \Omega_{\{F_n\},r} .
\]
This choice is possible since (\ref{PV Green}) and (\ref{PV est}) hold.

Let $\varphi: (z,w)\mapsto \left({z}/{e^{r_2 d^n}},{w}/{e^{r_2 d^n}} \right)$ for $(z,w)\in \mathbb{C}^2$ and let $\Phi=\varphi \circ F_n \circ \cdots \circ F_1$. Then  $\Phi (U)=B(0,1)$ and $\Phi(K)\subseteq B(0,\epsilon)$. 

Let $r_n\uparrow r$  and $R_n \rightarrow \infty$, as $n\rightarrow \infty$. Now $\Omega_{\{F_n\},r} =\bigcup_{n\geq 1} K_n$ where $K_n={\overline{\Omega}}_{r_n} \cap V_{R_n}$ and $V_{R_n}=\{(z,w)\in \mathbb{C}^2: \lvert z\rvert, \lvert w\rvert\}\leq R_n \}$.  For each $n\geq 1$, there exists $m(n)>1$ such that  $e^{(r_{n+1}-r_{n+2})d^{m(n)}}< {1}/{n}$ and 
\[
U_n=\left\{(z,w)\in \mathbb{C}^2: (z,w)\in F_1^{-1}\circ \cdots \circ F_{m(n)}^{-1}\left(B\left(0,e^{r_{n+2} d^{m(n)}}\right)\right)\right\} \subseteq \Omega_{\{F_n\},r}  .
\]
Thus there exists a sequence of automorphisms  $\Phi_n=\varphi_{m(n)} \circ F_{m(n)} \circ \cdots \circ F_1$  such that 
\[
\Phi_n (K_n) \subseteq B\left (0, \frac{1}{n}\right ) \text { and } \Phi_n (U_n)=B(0,1).
\]
where  $\varphi_{m(n)}(z,w)= \left({z}/{e^{r_{n+2} d^{m(n)}}}, {w}/{e^{r_{n+2} d^{m(n)}}} \right) $. 

Let $p\in \Omega_{\{F_n\},r}$ and $\zeta$ be a tangent vector at $p$. Without loss of generality, assume $p=0$. Let $p\in U_{n}$ for all $n\geq n_0$. Now $\Phi_n(p)=0$ for all $n$. Let $(F_1^{-1}\circ \cdots \circ F_{m(n_0)}^{-1})'(0)(\eta_0)=\zeta$. Now $(F_1^{-1}\circ \cdots \circ F_{m(n)}^{-1})'(0)(\eta_n)=\zeta$ where $\eta_n=(F_{m(n)}\circ \cdots \circ F_{m(n_0+1)})'(0)(\eta_0)$. Clearly, $\lvert \eta_n\rvert\leq \lvert \eta_0 \rvert$ for all $n\geq n_0$.
So for any $R>0$, we can choose $n$ sufficiently large such that the map $\rho(\Delta)\subseteq B(0,e^{r_{n+2} d^n})$ where $\rho(z)=R \eta_n z $ and $\Delta$ is the unit disc. 
Now consider the map $T_n=F_1^{-1}\circ \cdots \circ F_{m(n)}^{-1}\circ \rho: \Delta \rightarrow \Omega_r$. Note that $T_n(0)=0$ and $T_n'(0)=R \zeta$. Thus the Kobayashi metric vanishes identically on $\Omega_{\{F_n\},r}$. This completes the proof.

\subsection{Proof of Theorem \ref{SC2 FB}:} We first show the existence of a sequence $t_n \rightarrow 2$ such that the corresponding basin of attraction is a Short $\mathbb{C}^2$.

{\subsubsection*{\textbf{Proof of A}} Let $0<r'<r<1$ and $r_l=r{(r')}^l$. Then 
\begin{equation} \label{Delta}
F_{n+l}(\Delta^2(0;r_l)) \subseteq \Delta^2 (0;r_l^2+c^{t_{n+l}^{n+l}}).
\end{equation}
\no 
{\it Claim:} For large $n$, $t_n^l > \frac{l+1}{2}$, for all $l\geq 1$.  

Let $n$ be so large such that $t_n > 1.9$, then for $l=1$, the claim follows. Assuming that it holds for $l$, we shall prove that it holds for $l+1$. Note that
\begin{equation*}
t_n^{l+1} = t_n^l t_n  > \frac{l+1}{2} 1.9= 1.9 \frac{l}{2}+ \frac{1.9}{2}>\frac{l}{2}+\frac{.9l}{2}+ \frac{1.9}{2}\geq \frac{l}{2}+\frac{2.8}{2} > \frac{l+2}{2}.
\end{equation*}
Therefore, 
\begin{eqnarray*}
\log c^{t_{n+l}^{n+l}} &=& t_{n+l}^{n+l}  \log c  \\
&=& ({t_{n+l}^n})( {t_{n+l}^l}) \log c
\leq  ({t_{n+l}^n}) \frac{(l+1)}{2} \log c \\
&\leq& ({t_{n+l}^n}) \frac{(l+1)}{4} \log c + ({t_{n+l}^n}) \frac{(l+1)}{4} \log c\\
&\leq& \log r(1-r) + (l+1) \log r'
\end{eqnarray*}
for $n$ sufficiently large and for all $l\geq 1$. Thus 
\[
c^{t_{n+l}^{n+l}} \leq r(1-r)r'^{(l+1)}\leq r {(r')}^{l+1}-r^2 {(r')}^{l+1} \leq r {(r')}^{l+1}-r^2 {(r')}^{2l} 
\]
which gives 
\[
r_l^2+ c^{t_{n+l}^{n+l}} \leq r_{l+1}.
\]
This proves that for large $n$,
\begin{equation} \label{Delta est}
F_{n+l}(\Delta^2(0;r_l)) \subseteq \Delta^2 (0;r_{l+1})
\end{equation}
for all $l\geq 1$.
Once we prove (\ref{Delta est}), that $\Omega$ is an increasing union of balls and the Kobayashi metric on $\Omega$ vanishes identically, follow exactly in the same way as in Thm.\ 1.4 in \cite{F}. 

Choose a sequence $\{t_n\}_{n\geq 1}$ such that  
\begin{equation} \label{Tn}
t_n^n \geq 2 t_{n-1}^{n-1}.
\end{equation}
One such example is given by  $t_n=2^{\left(1-\frac{\epsilon_n}{n}\right)}$
with $0<\epsilon_{n+1} \leq \epsilon_{n}< 1$, for all $n\geq 1$. Now let  $F(n)=F_n \circ \cdots \circ F_1$ for all $n\geq 1$
and
\[
\varphi_n(z,w)=\max \{\lvert {(F(n))}_1(z,w)\rvert,\lvert {(F(n))}_2 (z,w)\rvert, c^{t_{n-1}^{n-1}} \}.
\]
If $\varphi_n(z)\leq 1$, then
\begin{eqnarray*}\label{est psh}
\varphi_{n+1}(z,w)&=&\max \left\{{({(F(n))}_1(z,w))}^2+c^{t_{n}^{n}}{({(F(n))}_2(z,w))}, c^{t_{n}^{n}}{({(F(n))}_1(z,w))}, c^{t_{n}^{n}}\right\}\nonumber\\
&\leq& \max \left\{\varphi_n^2(z,w)+c^{t_{n}^{n}} \varphi_n(z,w), c^{t_{n}^{n}}\varphi_n(z,w),c^{t_{n}^{n}}\right\}.
\end{eqnarray*}
Therefore it follows from (\ref{Tn}) that if $\varphi_n(z,w)\leq 1$, then 
\[
\varphi_{n+1}(z,w) \leq 2 \varphi_n^2(z,w).
\]
If $\varphi_n(z,w)\geq 1$, then it is easy to see that
\[
\varphi_{n+1}(z,w) \leq 2 \varphi_n^2(z,w).
\]
Therefore we have 
\[
\varphi_{n+1}(z,w) \leq 2 \varphi_n^2(z,w).
\]
for all $(z,w)\in \mathbb{C}^2$ and 
\[
\frac{1}{2^{n+1}} \log \varphi_{n+1}(z,w) \leq \frac{\log 2}{2^{n+1}}+ \frac{1}{2^{n}} \log \varphi_{n}(z,w). 
\]
Thus the sequence of psh function 
\[
\frac{1}{2^{n}} \log \varphi_{n}(z,w) + \sum_{j>n }\frac{\log 2}{2^{j}}
\]
monotonically decreases to a psh function $\varphi$  in $\mathbb{C}^2$. 

It can be easily checked that if $\varphi(z,w)<0$ for any $(z,w)\in \mathbb{C}^2$, then $(z,w)\in \Omega$. Now  if $(z,w)\in \Omega$, then $(F_n \circ \cdots \circ F_1)(z,w)\rightarrow 0$ as $n\rightarrow \infty$ and thus 
\begin{equation*}
\frac{1}{2^n} \log \varphi_n(z,w) = \max \left\{ \frac{1}{2^n} \log \left\lvert F_1^n(z,w)\right\rvert,  \frac{1}{2^n}\log\left\lvert F_2^n(z,w)\right\rvert, \frac{t_{n-1}^{n-1}}{2^n}\log c\right\}
\leq \frac{1}{2^n} \log c
\end{equation*}
for sufficiently large $n$. This implies 
\[
\frac{1}{2^n} \log \varphi_n(z,w)+\sum_{j>n }\frac{\log 2}{2^{j}} \leq \frac{1}{2^n} \log c + \frac{1}{2^n} \log 2=\frac{1}{2^n} \log(2c) <0
\]
for large $n$. Therefore, $\varphi(z,w)<0$ for all $(z,w)\in \Omega$. It remains to show that $\varphi$ is non-constant on $\Omega$ which follows applying the same argument as in Thm.\ 1.4 in \cite{F}. Thus we omit the proof here.

{\subsubsection*{\textbf{Proof of B}} We prove a series of lemmas to show the existence of a sequence $\{t_n\} \rightarrow 2 $ such that the corresponding non-autonomous basin of attraction turns out to be a Fatou-Bieberbach domain. 

We use the following notations:  For each $n\geq 1$, let $F(n)=F_n\circ \cdots \circ F_1$. For $(z,w)\in \Omega$, let $(z_n,w_n)=F(n)(z,w)$. For a  compact set $K$ in the basin of the attraction $\Omega$, set $\delta_n=\max \left\{|z_n|,|w_n|:(z,w)\in K \right\}$.

\begin{lem}\label{FB1}
Assume that $a_n=c^{t^n}$ for $0<c<1$ and $1<t<2.$ Fix $0<b<1.$
Set $F_n(z,w)=(z^2+a_nw,a_n z).$ Then there exists an $n_0$ such that
 $\delta_{n+k}\leq a_{n+k+1}b^k$ for all $n\geq n_0$ and for all $k\geq 0$.
 \end{lem}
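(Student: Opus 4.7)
The plan is to induct on $k$ with $n \geq n_0$ held fixed, where $n_0$ will be enlarged finitely many times; the base case $k=0$ (namely $\delta_{n_0} \leq a_{n_0+1}$) is the main obstacle, since the inductive step is a routine estimate.

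For the inductive step, assume $\delta_{n+k} \leq a_{n+k+1} b^k$. The coordinate bounds $|z_{n+k+1}| \leq |z_{n+k}|^2 + a_{n+k+1}|w_{n+k}|$ and $|w_{n+k+1}| \leq a_{n+k+1}|z_{n+k}|$ give
\[
\delta_{n+k+1} \leq \delta_{n+k}^2 + a_{n+k+1}\delta_{n+k} \leq a_{n+k+1}^2 b^{2k} + a_{n+k+1}^2 b^k \leq 2\, a_{n+k+1}^2\, b^k.
\]
The desired bound $\delta_{n+k+1} \leq a_{n+k+2}\, b^{k+1}$ then reduces to $2 a_{n+k+1}^2 \leq b\, a_{n+k+2}$, equivalently
\[
\frac{2}{b} \leq \frac{a_{n+k+2}}{a_{n+k+1}^2} = \left(\frac{1}{c}\right)^{(2-t)\, t^{n+k+1}}.
\]
Since $2-t > 0$ and $c < 1$, the right-hand side is strictly increasing in $n+k$ and tends to $\infty$, so one sufficiently large choice of $n_0$ secures the inequality uniformly for every $k \geq 0$.

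For the base case, I must produce some $n_0$ with $\delta_{n_0} \leq a_{n_0+1}$. Compactness of $K \Subset \Omega$ gives $\delta_n \to 0$ uniformly, so I fix a small $\eta < 1/4$ and $N$ large enough that $\delta_N \leq \eta$ and $a_n < \eta$ for $n \geq N$. Suppose for contradiction that $\delta_n > a_{n+1}$ for all $n \geq N$; then the recursion simplifies to $\delta_{n+1} \leq 2\delta_n^2$, which iterates to $2\delta_{N+j} \leq (2\eta)^{2^j}$. Comparing with $a_{N+j+1} = c^{t^{N+j+1}}$ on a logarithmic scale,
\[
\frac{\log(1/\delta_{N+j})}{\log(1/a_{N+j+1})} \gtrsim \frac{2^j \log(1/(2\eta))}{t^{N+j+1}\log(1/c)} = \mathrm{const}\cdot\left(\frac{2}{t}\right)^j \longrightarrow \infty
\]
because $t < 2$. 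Hence for $j$ large one has $\delta_{N+j} < a_{N+j+1}$, contradicting the supposition and producing the required $n_0$.

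The whole argument turns on the strict inequality $t < 2$: this gap allows the quadratic feedback $z \mapsto z^2$ to dominate the doubly exponentially decaying coefficients $a_n$, both in driving $\delta_n$ below $a_{n+1}$ in the base case and in propagating the extra factor of $b$ in the inductive step. If $t \geq 2$, neither half of the argument survives, which is consistent with the very different behavior found in Part~A for slowly decaying $t_n \nearrow 2$.
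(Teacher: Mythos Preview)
Your proof is correct and follows essentially the same two-step structure as the paper: first obtain the base case $\delta_{n_0}\le a_{n_0+1}$ by a contradiction argument exploiting that $\delta_{n+1}\le 2\delta_n^2$ beats $a_{n+k+1}=c^{t^{n+k+1}}$ since $t<2$, and then push the inductive step through via the inequality $(1/c)^{(2-t)t^{n+k+1}}\to\infty$. The only cosmetic gap is that you verify the base case at a single index $n_0$ while the statement asks for all $n\ge n_0$, but this is automatic since $\delta_{n+k}=\delta_{n_0+(n-n_0+k)}\le a_{n+k+1}b^{\,n-n_0+k}\le a_{n+k+1}b^{k}$.
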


\begin{proof}
There are two steps. First we prove that there exists an arbitrarily large $n,$ so that $\delta_n\leq a_{n+1}.$
Suppose on the contrary there is some $n_0$ so that $\delta_n>a_{n+1}$ for all $n>n_0.$
Then for all such $n$'s, $\delta_{n+1}\leq \delta_n^2+a_{n+1}\delta_n\leq 2\delta_n^2.$
Hence $\delta_{n+k}\leq 2^{1+2+\cdots +2^{k-1}}(\delta_n)^{2^k}.$
So $a_{n+k+1}\leq \delta_{n+k}\leq (2\delta_n)^{2^k}$. This implies
that
$(c^{t^{n+1}})^{t^k}\leq (2\delta_n)^{2^k}$. But we can assume that $2\delta_n<1.$ Then this will fail for all large $k$'s.
The second step is to fix a large such n and then prove that $\delta_{n+k}\leq a_{n+k+1}b^k$ for all positive $k$.
This is true for $k=0.$ We will show that this inquality holds inductively.
So suppose that $\delta_{n+k}\leq a_{n+k+1}b^k$.
Then we have that $\delta_{n+k+1}\leq (a_{n+k+1})^2(b^{2k}+b^k).$
Hence we would like to say that $(a_{n+k+1})^2(b^{2k}+b^k)\leq a_{n+k+2}b^{k+1}.$
This means that
$$
\left( c^{t^{n+k+1}} \right)^{2-t}\leq \frac{b}{b^{k}+1}.
$$
This works as long as we have fixed a large $n$.
\end{proof}
In the next lemma, we assume $1<t_n<2$ for all $n$.
\begin{lem} \label{FB2}
We set $a_n=c^{t_n^n}.$ If there exists an arbitrarily large $n\geq 1$ and a $k(n)\geq1$ (depending on $n$) so that $t_{n+k(n)+1} \leq  2^{\frac{k(n)}{n+k(n)+1}}$, then there exists an arbitrarily large $n$ so that $\delta_n\leq a_{n+1}$.
\end{lem}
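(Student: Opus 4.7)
The plan is a proof by contradiction, imitating the first step of Lemma~\ref{FB1} with the hypothesis on $t_{n+k(n)+1}$ replacing the fixed bound $t<2$. Preliminarily note that for any compact $K\subseteq\Omega$ we have $\delta_n\to 0$, since $F(n)$ tends to $0$ locally uniformly on the non-autonomous basin $\Omega$ (a standard fact for such basins of attraction).

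Assume to the contrary that there exists $n_0$ such that $\delta_n>a_{n+1}$ for every $n\geq n_0$. From $F_n(z,w)=(z^2+a_n w,a_n z)$ and this assumption, for every $n\geq n_0$ we obtain
\[
\delta_{n+1}\leq \delta_n^2+a_{n+1}\delta_n\leq 2\delta_n^2,
\]
and iterating exactly as in Lemma~\ref{FB1} yields
\[
\delta_{n+k}\leq 2^{2^k-1}\delta_n^{2^k}\leq (2\delta_n)^{2^k}\qquad\text{for all }n\geq n_0,\ k\geq 0.
\]
Now invoke the lemma's hypothesis: choose $n\geq n_0$ as large as needed together with $k=k(n)\geq 1$ satisfying $t_{n+k+1}^{n+k+1}\leq 2^k$. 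Because $0<c<1$ this forces $a_{n+k+1}=c^{t_{n+k+1}^{n+k+1}}\geq c^{2^k}$. Combined with $a_{n+k+1}<\delta_{n+k}\leq (2\delta_n)^{2^k}$, extracting $2^k$-th roots gives $c\leq 2\delta_n$, i.e.\ $\delta_n\geq c/2$. Since the applicable $n$'s are unbounded, this produces a subsequence along which $\delta_n\geq c/2$, contradicting $\delta_n\to 0$.

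The hypothesis $t_{n+k+1}^{n+k+1}\leq 2^k$ is exactly the threshold at which $a_{n+k+1}$ can no longer out-shrink the double-exponential bound $(2\delta_n)^{2^k}$ produced by iterating $\delta_{m+1}\leq 2\delta_m^2$; the two quantities lie in a balanced competition, and the constant $c/2$ drops out of the $2^k$-th root. The main subtleties are the correct matching of quantifiers (the contradiction assumption must hold at $m=n,n+1,\ldots,n+k(n)-1$ in order to iterate $k(n)$ times, which is automatic once $n\geq n_0$) and the justification of $\delta_n\to 0$ on compact subsets of $\Omega$; the latter is a standard-but-not-entirely-trivial dynamical fact and is the one place where a careful write-up would need to pause.
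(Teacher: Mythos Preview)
Your proof is correct and follows essentially the same route as the paper's: both argue by contradiction, iterate $\delta_{m+1}\le 2\delta_m^2$ to obtain $a_{n+k+1}<(2\delta_n)^{2^k}$, and then pit this against the hypothesis $t_{n+k(n)+1}^{\,n+k(n)+1}\le 2^{k(n)}$ together with $\delta_n\to 0$. The only cosmetic difference is that the paper first fixes a large $n$ with $2\delta_n<c$ and concludes $t_{n+k+1}^{\,n+k+1}>2^k$ for all $k$, whereas you take the $2^k$-th root to get $\delta_n\ge c/2$ along a subsequence; these are contrapositives of one another.
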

\begin{proof}
If the claim of the lemma does not hold, then $\delta_n>a_{n+1}$ for all large $n.$
Therefore, $\delta_{n+1}\leq\delta_n^2+a_{n+1}\delta_n\leq 2\delta_n^2$ and 
inductively we get $\delta_{n+k}\leq 2^{1+2+\cdots+2^{k-1}}\delta_n^{2^k}$ for all $k\geq 1$.
Hence $a_{n+k+1}<(2\delta_n)^{2^k}$ and consequently, $c^{t_{n+k+1}^{n+k+1}}<(2\delta_n)^{2^k}$.
If $n$ is large enough, then $2\delta_n<c.$ Thus $t_{n+k+1}^{n+k+1}>2^k$ for all large $n$ and for all $k\geq 1$. Hence we have $2^{k(n)}< t_{n+k+1}^{n+k+1}<2^{k(n)}$ which is contradiction. This completes the proof.
\end{proof}

\begin{lem} \label{FB3}
Let $0<b, c<1.$ Set $t_n=2^{1-\frac{\epsilon_n}{n}}$ where $\epsilon_{n+1}\geq \epsilon_n+\frac{1}{2^{n/2}\ln 2}$ and $\epsilon_n/n\rightarrow 0$. Let $a_n=c^{t_n^n}$ and let $F_n=(z^2+a_nw,a_n z)$. Suppose there exists an arbitrarily large $n$ and $k(n)\geq 1$ so that $t_{n+k(n)+1}\leq2^{\frac{k(n)}{n+k(n)+1}}$ (This means that $\epsilon_{n+k(n)+1}\geq n+1)$. Then there exists an arbitrarily large $n$ so that for all $k\geq 0$, $\delta_{n+k}\leq a_{n+k+1}b^k$. 
\end{lem}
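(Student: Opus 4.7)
The plan is to combine Lemma \ref{FB2}, which supplies the base case $k=0$, with an inductive step modeled on the second half of the proof of Lemma \ref{FB1}. Since the hypothesis of Lemma \ref{FB3} coincides exactly with that of Lemma \ref{FB2}, the latter furnishes arbitrarily large $n$ with $\delta_n\leq a_{n+1}$. I fix one such $n$, chosen also large enough to accommodate the growth estimate described below, and induct on $k$ to show $\delta_{n+k}\leq a_{n+k+1}b^k$ for every $k\geq 0$.

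The inductive step mirrors the one in Lemma \ref{FB1}: assuming $\delta_{n+k}\leq a_{n+k+1}b^k$, the estimate
\begin{equation*}
\delta_{n+k+1}\leq \delta_{n+k}^2+a_{n+k+1}\delta_{n+k}\leq a_{n+k+1}^2\bigl(b^{2k}+b^k\bigr)
\end{equation*}
reduces the desired bound $\delta_{n+k+1}\leq a_{n+k+2}b^{k+1}$ to
\begin{equation*}
\frac{a_{n+k+1}^2}{a_{n+k+2}}\ =\ c^{\,2t_{n+k+1}^{n+k+1}-t_{n+k+2}^{n+k+2}}\ \leq\ \frac{b}{b^k+1}.
\end{equation*}
Because $b/(b^k+1)\geq b/2$ uniformly in $k\geq 0$, it suffices to verify that the exponent $E:=2t_{n+k+1}^{n+k+1}-t_{n+k+2}^{n+k+2}$ is bounded below by the fixed constant $\log(2/b)/(-\log c)$ for every $k\geq 0$.

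The substantive content of the lemma is in this last estimate, and it is exactly where the hypotheses on $\epsilon_m$ are used. Writing $t_m^m=2^{m-\epsilon_m}$ gives
\begin{equation*}
E\ =\ 2^{n+k+2-\epsilon_{n+k+2}}\bigl(2^{\epsilon_{n+k+2}-\epsilon_{n+k+1}}-1\bigr),
\end{equation*}
and the gap condition $\epsilon_{m+1}-\epsilon_m\geq 1/(2^{m/2}\ln 2)$ combined with the elementary inequality $2^x-1\geq x\ln 2$ for $x\geq 0$ yields $E\geq 2^{(n+k+3)/2-\epsilon_{n+k+2}}$. The sublinearity $\epsilon_m=o(m)$ then forces this lower bound to tend to infinity with $k$; in particular, it dominates $\log(2/b)/(-\log c)$ for every $k\geq 0$ once $n$ is fixed sufficiently large, and the induction closes.

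The main obstacle I expect is precisely this uniformity in $k$: the induction must be driven from a single well-chosen starting index $n$, with no opportunity to re-adjust $n$ later. The role of the prescribed lower bound on $\epsilon_{m+1}-\epsilon_m$ is exactly to overcome it, ensuring that $t_{m+1}^{m+1}$ exceeds $2t_m^m$ by an amount which, once translated through $a_m=c^{t_m^m}$, overwhelms the tolerance factor $(b^k+1)/b$ for all $k\geq 0$ simultaneously.
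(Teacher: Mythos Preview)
Your proposal is correct and follows essentially the same approach as the paper: both invoke Lemma~\ref{FB2} for the base case, reproduce the inductive step from Lemma~\ref{FB1}, and reduce the key inequality to showing that $E=2^{n+k+2-\epsilon_{n+k+2}}\bigl(2^{\epsilon_{n+k+2}-\epsilon_{n+k+1}}-1\bigr)$ is large, using the gap hypothesis on $\epsilon_m$ and the inequality $2^x-1\geq x\ln 2$ to obtain the lower bound $2^{(n+k+3)/2-\epsilon_{n+k+2}}$. Your write-up is in fact slightly more explicit than the paper's about why a single large $n$ suffices uniformly in $k$ (you observe that the bound depends only on $m=n+k+2$ and tends to infinity by $\epsilon_m=o(m)$), whereas the paper simply asserts ``the above follows if $n$ is large enough.''
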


\begin{proof}
By Lemma \ref{FB2}, there exists an arbitrarily large $n$ so that $\delta_n\leq a_{n+1}.$ Pick such a large $n.$ As in the proof of Lemma \ref{FB1}, we would like to have the estimate
	$(a_{n+k+1})^2 (b^{2k}+b^k)\leq a_{n+k+2}b^{k+1}$
	to prove the lemma for all $k.$
	This means that we want
	$$
	c^{2[2^{(1-\frac{\epsilon_{n+k+1}}{n+k+1})}]^{n+k+1}}
	(b^{2k}+b^k) \leq c^{[2^{(1-\frac{\epsilon_{n+k+2}}{n+k+2}}]^{n+k+2}}b^{k+1},
	$$
	i.e., 
	$$
	c^{2^{1+n+k+1-\epsilon_{n+k+1}}}\leq c^{2^{n+k+2-\epsilon_{n+k+2}}}{\frac{b}{1+b^k}},
	$$
	i.e.,
	$$
	c^{\left(2^{n+k+2}[2^{-\epsilon_{n+k+1}}-2^{-\epsilon_{n+k+2}}] \right)}\leq \frac{b}{1+b^k},
	$$
	i.e,.
$$
c^{2^{n+k+2}\cdot 2^{-\epsilon_{n+k+2}}[2^{\epsilon_{n+k+2}-\epsilon_{n+k+1}}-1]}\leq \frac{b}{1+b^k}.
$$
The above follows if
$$
c^{2^{n+k+2}\cdot 2^{-\epsilon_{n+k+2}}[\ln 2 (\epsilon_{n+k+2}-\epsilon_{n+k+1})]}\leq \frac{b}{1+b^k},
$$
which again follows if
$$
c^{2^{n+k+2}\cdot 2^{-\epsilon_{n+k+2}}\frac{1}{2^{(n+k+1)/2}}}\leq \frac{b}{1+b^k}.
$$	
The above follows if $n$ is large enough. 
\end{proof}
We continue to use previous notations  and suppose that $\epsilon_n\rightarrow \infty.$ Then we prove that $\Omega$ is a Fatou-Bieberbach domain.

Since $\epsilon_n\rightarrow \infty,$ for every $n\geq 1$ there exists an  integer $k(n)\geq 1$ so that $\epsilon_{n+k(n)+1}\geq n+1.$  Then applying Lemma \ref{FB3}, we have an arbitrarily large $n$ so that for all $k\geq 0$, 
$\delta_{n+k}\leq a_{n+k+1}b^k$. We then need to show that  $a_1\cdots a_n\leq c^{n/2} a_{n+1}$ for all large $n$. This is equivalent to the inequality
$$
t_1^1+t_2^2+\cdots +t_n^n\geq n/2+t_{n+1}^{n+1}.
$$
We write this as $ls_n\geq rs_n.$
Here the nth left side is $ls_n= t_1^1+t_2^2+\cdots+t_n^n$ and the nth right side is 
$rs_n=\frac{n}{2}+t_{n+1}^{n+1}.$ 
We compare the growth of the left side and the right side:
$$
G_n=(ls_{n+1}-ls_n)-(rs_{n+1}-rs_n)=(t_{n+1}^{n+1})-(1/2+t_{n+2}^{n+2}-t_{n+1}^{n+1})=2t_{n+1}^{n+1}-t_{n+2}^{n+2}-1/2.
$$
Thus	
\begin{align*}
G_n=2 (2^{1-\frac{\epsilon_{n+1}}{n+1}})^{n+1}-(2^{1-\frac{\epsilon_{n+2}}{n+2}})^{n+2}-1/2
=2^{n+2}(2^{-\epsilon_{n+1}}-2^{-\epsilon_{n+2}})-1/2\\
=2^{n+2}2^{-\epsilon_{n+2}}
	(2^{\epsilon_{n+2}-\epsilon_{n+1}}-1)-1/2.
\end{align*}
Hence
$$
G_n\geq 2^{n+2}2^{-\epsilon_{n+2}}\ln 2(\epsilon_{n+2}-\epsilon_{n+1})-1/2
$$
which implies
$$
G_n\geq 2^{3(n+2)/4}\frac{1}{2^{(n+1)/2}}-1/2\rightarrow \infty.
$$
Now this is the first step in the proof of Lemma 3.13 in \cite{F}. Now the rest of the proof of Lemma 3.13 goes through and then to show that $\Omega$ is a Fatou-Bieberbach domain,  one can follow the same steps as in the the last paragraph of \cite{F}.

\end{document}